\def\frak{\mathfrak}
\def\Cal{\mathcal}
\def\D{\mathbb{D}}
\def\R{\mathbb{R}}
\def\Y{\mathbb{Y}}
\def\Z{\mathbb{Z}}
\def\cA{\mathcal{A}}
\def\cE{\mathcal{E}}
\def\cI{\mathcal{I}}
\def\cT{\mathcal{T}}
\def\cU{\mathcal{U}}
\def\cB{\mathcal{B}}
\def\cK{\mathcal{K}}
\newcommand{\rP}{\mathrm{P}}
\newcommand{\rJ}{\mathrm{J}}
\def\de{\delta}
\def\si{\sigma}
\def\ph{\varphi}
\def\om{\omega}
\def\Ga{\Gamma}
\def\De{\Delta}
\def\na{\nabla}
\def\form#1{\mathbf{#1}}
\newcommand{\lpl}{
  \mbox{$
  \begin{picture}(12.7,8)(-.5,-1)
  \put(2,0.2){$+$}
  \put(6.2,2.8){\oval(8,8)[l]}
  \end{picture}$}}
\newcommand{\upl}
  {\mbox{$
  \begin{picture}(12.7,8)(-.5,-1)
  \put(2,0.2){$+$}
  \put(6.2,2.8){\oval(8,8)[t]}
  \end{picture}$}}
\newlength\celldim
\newlength\fontheight
\newlength\extraheight
\def\myng#1{{\Yvcentermath1\scriptsize\yng(#1{})}}
\newcommand{\newc}{\newcommand}
\newtheorem{theorem}{Theorem}[section]
\newtheorem{lemma}[theorem]{Lemma}
\newtheorem{proposition}[theorem]{Proposition}
\newtheorem{corollary}[theorem]{Corollary}
\newtheorem{definition}[theorem]{Definition}
\theoremstyle{remark}
\newtheorem{remark}[theorem]{\rm\bf Remark}
\newtheorem*{remark*}{\rm\bf Remark}
\newcommand{\ce}{{\Cal E}}
\newcommand{\nd}{\nabla}
\newcommand{\nn}[1]{(\ref{#1})}
\newc{\aR}{\mbox{\boldmath{$ R$}}}
\newc{\aS}{\mbox{\boldmath{$ S$}}}
\newc{\aDeR}{\mbox{\boldmath{$ U$}}_B{}^P{}_C{}^Q}
\newc{\aDe}{\mbox{\boldmath$ \Delta$}}
\newc{\aNd}{\mbox{\boldmath$ \nabla$}}
\newc{\aK}{\mbox{\boldmath{$ K$}}}
\newc{\aL}{\mbox{\boldmath{$ L$}}}
\newcommand{\gog}{\mathfrak g}
\def\sideremark#1{\ifvmode\leavevmode\fi\vadjust{\vbox to0pt{\vss
 \hbox to 0pt{\hskip\hsize\hskip1em
 \vbox{\hsize3cm\tiny\raggedright\pretolerance10000
 \noindent #1\hfill}\hss}\vbox to8pt{\vfil}\vss}}}%
\def\idx#1{{\em #1\/}}
\author{J.-P.\ Michel, P.\ Somberg and  J.\ \v Silhan}
\title{Prolongation of symmetric Killing tensors and 
commuting symmetries of the Laplace operator}
\begin{document}

\begin{abstract}
We determine the space of commuting symmetries of the Laplace operator on
pseudo-Riemannian manifolds of constant curvature, and derive
its algebra structure. Our construction is based
on the Riemannian tractor calculus, allowing to construct a prolongation of 
the differential system
for symmetric Killing tensors. We also discuss some aspects of its relation 
to projective differential geometry.

{\bf Key words:} Killing tensors, Prolongation of PDEs, Commuting symmetries of Laplace operator.

{\bf MSC classification:} 35R01, 53A20, 58J70, 35J05 .

\end{abstract}

\address{JPM: University of Li\`ege \\ 
Grande Traverse, 12, Sart-Tilman \\ 
B-4000 Li\`ege, Belgium}
\email{jean-philippe.michel@ulg.ac.be}

\address{PS: Mathematical Institute \\ Charles
  University \\ Sokolovsk\'a 83 \\
  Prague\\ Czech Republic} \email{somberg@karlin.mff.cuni.cz}

\address{JS: Institute of Mathematics and Statistics \\ Masaryk
  University \\ Building 08 \\ Kotl\'a\v{r}sk\'a 2 \\ 611 37,
  Brno\\ Czech Republic} \email{silhan@math.muni.cz}

\maketitle

\pagestyle{myheadings}
\markboth{Michel, Somberg \& \v Silhan}{Prolongation, Killing tensors, 
Commuting symmetries of the Laplace operator}


\section{Introduction}

The Laplace operator is one of the cornerstones of geometrical analysis on
pseudo-Riemannian manifolds, and there exists a close relationship between spectral
properties of the Laplace operator and local as well as global invariants of the underlying 
pseudo-Riemannian manifold.

The question of conformal symmetries of the Yamabe-Laplace operator 
$\De$ on conformally flat spaces has been solved in \cite{east}. 
A differential operator $D$ is a conformal symmetry of $\De$ provided 
$[\Delta,D]\in(\Delta)$, where $(\Delta)$ is the left ideal 
generated by $\De$ in the algebra of differential operators.
These operators $D$ are called conformal symmetries, because they 
preserve the kernel of $\De$. On a given flat conformal 
manifold $M$, there is a bijection between the vector space of 
symmetric conformal Killing tensors and the quotient of the space of conformal symmetries by 
$(\Delta)$. Notice that the space of symmetric conformal Killing
tensors is the solution space of a conformally invariant system of overdetermined
partial differential equations, which turns out to be locally finite dimensional.

In the present paper, we classify the commuting symmetries of the Laplace
operator on pseudo-Riemannian manifolds of constant curvature, i.e.,
manifolds locally isometric to a space form. In this case the 
Laplace operator differs from the Yamabe-Laplace operator by a multiple 
of the identity operator. With an abuse of notation, we denote both by $\De$.
 The eigenspaces of the Laplace operator  are 
preserved by commuting symmetries, i.e., 
by linear differential operators $D$ commuting with the Laplace operator:
$$
[\Delta,D]=0.
$$ 
The vector space of commuting symmetries is generated by     
Killing vector fields and their far reaching generalization called
symmetric Killing tensors, or Killing tensors for short.  
Their composition as differential 
operators provides an algebra structure that we determine. 

Killing $2$-tensors on pseudo-Riemannian manifolds are the most studied among Killing tensors, 
and they play a key role in the separation of variables
of the Laplace equation. The construction of commuting symmetries out of 
Killing $2$-tensors is well-known in number of geometrical situations \cite{Car77},
in particular on constant curvature manifolds.
Higher Killing tensors give integrals of motion for the geodesic equation and 
contribute to its integrability. 
They can be regarded as hidden symmetries of the underlying pseudo-Riemannian manifold.
Killing tensors themselves are solutions of an invariant system 
of PDEs, and trace-free Killing tensors are special examples of
conformal Killing tensors.

As a technical tool we introduce, and to a certain extent develop, 
the Riemannian tractor calculus, focusing mainly on manifolds of
constant curvature. 
This allows a uniform description of the prolongation of the 
invariant system of PDEs for Killing tensors, and plays a key role
in our analysis of the correspondence between commuting symmetries of the
Laplace operator and Killing tensors.
In particular, we obtain an explicit version of the identification in \cite{MMS} (see also \cite{Tak})
of the space of fixed valence Killing tensors with a representation
of the general linear group.

The Riemannian tractor calculus can be interpreted 
as the tractor calculus for projective parabolic geometry in a scale corresponding to
a metric connection in the projective class of affine connections.
Restricting to locally flat special affine connections, 
Einstein metric 
connections in the projective class correspond to manifolds of constant
curvature \cite{gm}. 
Since the Killing equations on symmetric tensor fields are
projectively invariant \cite{EGproj}, 
we can use the invariant tractor calculus in projective parabolic geometry
to construct commuting symmetries. Note the projective invariance
explains that the space of Killing tensors carries a representation of the general linear group.

As for the style of presentation and exposition, we have tried to make the paper accessible 
to a broad audience, with basic knowledge in Riemannian geometry. 
Following this perspective, the structure of our paper goes as follows. After setting the conventions in 
Section $2$, we introduce in Section $3$ the rudiments of Riemannian tractor calculus.
The core of the article is in Section $4$, where we construct the 
prolongation of the differential system for Killing tensors and 
derive the space of differential operators preserving the spectrum of the Laplace operator.
Afterwards, we determine the underlying structure of associative algebra on this space, 
induced by the composition of differential operators.
We compute explicit formulas for commuting symmetries of order at most $3$.
In special cases we compare commuting symmetries with conformal symmetries 
constructed in \cite{east}. 
In Section $5$, we interpret our results in terms of the holonomy reduction
of a Cartan connection in projective parabolic geometry and its restriction on a
curved orbit equipped with Einstein metric. 

The authors are grateful to A.\ \v Cap, M.\ G.\ Eastwood and A.\ R.\ Gover for 
their suggestion  on the interpretation of our results in the framework of projective
parabolic geometry.

\section{Notation and conventions}

Let $(M,g)$ be a smooth pseudo-Riemannian manifold. 
Throughout the paper we employ the Penrose's abstract index
notation and write $\ce^a$ to denote the space of smooth
sections of the tangent bundle $TM$ on $M$, and $\ce_a$ for the space
of smooth sections of the cotangent bundle $T^*M$. 
We also write $\ce$
for the space of smooth functions. All tensors considered are
assumed to be smooth.
 With abuse of 
notation, we will often use the same symbols for the bundles and their
spaces of sections. 
The metric $g_{ab}$ will be used to identify $TM$ with
$T^*M$. We shall assume that the manifold $M$ has dimension $n\geq 2$.

An index which appears twice,
once raised and once lowered, indicates the contraction.  
The square brackets $[\cdots]$ will denote skew-symmetrization of enclosed indices,
while the round brackets $(\cdots)$ will indicate symmetrization.

We write $\nabla$ for the Levi-Civita connection corresponding to $g_{ab}$. 
Then the Laplacian $\Delta$ is given by
$\Delta=g^{ab}\nd_a\nd_b= \nd^b\nd_b\,$. Since the Levi-Civita connection is 
torsion-free, the Riemannian curvature 
$R_{ab}{}^c{}_d$ is given by $ [\nd_a,\nd_b]v^c=R_{ab}{}^c{}_dv^d $, where 
$[\cdot,\cdot]$ indicates the commutator bracket.  The Riemannian
curvature can be decomposed in terms of the totally trace-free 
Weyl curvature $C_{abcd}$ and the symmetric
Schouten tensor $\rP_{ab}$,
\begin{eqnarray} \label{Rriem}
R_{abcd}=C_{abcd}+2{ g}_{c[a}\rP_{b]d}+2{ g}_{d[b}\rP_{a]c}.
\end{eqnarray}
We will refer to $\rP_{ab}$ as to Riemannian Schouten
tensor to distinguish from the projective Schouten tensor introduced later. 
We define $\rJ := \rP^a{}_a$, so that $\rJ = \frac{\mathrm{Sc}}{2(n-1)}$ with 
$\mathrm{Sc}$ the scalar curvature. 

Throughout the paper we work (if not stated otherwise) on manifolds of 
constant curvature, i.e., locally symmetric spaces with parallel curvature 
$R_{abcd} = \frac{4}{n}\rJ g_{c[a}g_{b]d}$, cf. \cite{wo}. 
This means that $\rJ$ is parallel for the Levi-Civita connection $\nabla$. 
In signature $(p,q)$, $M$ is then locally isomorphic to $G/H$, where $G=SO(p+1,q)$ and
$H=SO(p,q)$ if $J>0$, $G=SO(p,q+1)$ and
$H=SO(p-1,q+1)$ if $J<0$, $G=E(p,q)$ and $H=SO(p,q)$ if 
$J=0$. Here we denote by $E(p,q)$ the group of pseudo-Euclidean 
motions on $\R^{p,q}$.

\section{Tractor calculus in Riemannian geometry}

The notion of associated tractor bundles is well-known in the category
of parabolic geometries. We refer to \cite{CSbook} for a review with 
many applications. In the present section we introduce and develop 
rudiments of a class of tractor bundles 
in the category of pseudo-Riemannian manifolds, in 
a close analogy with tractor calculi in parabolic geometries.

Recall we assume $M$ has constant curvature, i.e.\
$R_{abcd} = \frac{4}{n}\rJ g_{c[a}g_{b]d}$ with $\rJ$ constant.
We define the \idx{Riemannian standard tractor bundle} (standard tractor 
bundle, for short) as 
$\cT := \mathcal{L} \oplus TM$ where $ \mathcal{L}$ denotes the trivial bundle 
over $M$. 

The Levi-Civita connection $\nabla_a$ induces a connection on $\cT$, 
which is trivial on $\mathcal{L}$.
The \idx{tractor connection} is another connection on $\cT$,
 also denoted (with an abuse of notation) by $\nabla_a$, and defined by 
\begin{equation}\label{stdtractor}
\nabla_a\begin{pmatrix}
f \\ \mu^b
\end{pmatrix}
=\begin{pmatrix}
\nabla_a f  - \mu_a \\
\nabla_a \mu^b + \frac2n \rJ f \de_a^b
\end{pmatrix}
\end{equation}
where $f \in \cE$, $\mu^b \in \cE^b$. In the first line, we use the isomorphism $TM \cong T^*M$. 
The dual connection on  
the dual bundle $\cT^* := T^*M \oplus \mathcal{L}$, denoted also by
$\na_a$, is given by
\begin{equation}\label{stdtractor*}
\nabla_a\begin{pmatrix}
\nu_b \\ f
\end{pmatrix}
=\begin{pmatrix}
\nabla_a \nu_b +f g_{ab}  \\
\nabla_a f - \frac2n \rJ \nu_a
\end{pmatrix}
\end{equation}
where $\nu_b \in \cE_b$ and $f \in \cE$. A direct computation shows that the 
curvature of the tractor connection $\na$ is trivial, i.e., the tractor connection 
$\na$ is flat. Note this connection differs
from the tractor connection induced by the Cartan connection, as discussed
in \cite[section 1.5]{CSbook}. 

The bundle $\cT$ is equipped with the symmetric bilinear form 
$\langle, \rangle$,
\begin{eqnarray} \label{stdpairing}
\langle
\begin{pmatrix}
f \\ \mu^b
\end{pmatrix},
\begin{pmatrix}
\bar{f}\\ \bar{\mu}^b
\end{pmatrix} \rangle = 
\frac{2}{n} \rJ f\bar{f} + \mu^b \bar{\mu}_b 
\end{eqnarray}
which is invariant with respect to the tractor connection $\na$. For 
$\rJ \not=0$, this form is non-degenerate and called the 
\idx{tractor metric}. It yields then an isomorphism $\cT \cong \cT^*$.

We define the \idx{adjoint tractor bundle} as
$\cA := \bigwedge^2 \cT = TM \oplus \wedge^2 TM$ and we extend the tractor
connection from $\cT$ to $\cA$ by the Leibniz rule. Similarly, we obtain
an induced tractor connection on $\cA^* = \wedge^2 T^*M \oplus T^*M$.
Explicitly, these connections are given by the formulas
\begin{equation} \label{traccon}
\nabla_a\begin{pmatrix}
\varphi^b\\ \psi^{bc}
\end{pmatrix}
=\begin{pmatrix}
\nabla_a \varphi^b  - 2 \psi_a{}^b \\
\nabla_a \psi^{bc} + \frac{2}{n} \rJ \de_a^{[b} \varphi^{c]}
\end{pmatrix}
\quad \text{and} \quad
\nabla_a\begin{pmatrix}
\mu_{bc}\\ \om_{b}
\end{pmatrix}
=\begin{pmatrix}
\nabla_a \mu_{bc}  + 2g_{a[b}\om_{c]} \\
\nabla_a \om_b - \frac{4}{n} \rJ \mu_{ab}
\end{pmatrix},
\end{equation}
where $(\ph^b,\psi^{bc}) \in \Ga(\cA)$, i.e.\ $\ph^b \in \cE^b$ and
$\psi^{bc} \in \cE^{[bc]}$, and 
$(\mu_{bc}, \om_{b}) \in \Ga(\cA^*)$, i.e.\ $\mu_{bc} \in \cE_{[bc]}$ and
$\om_b \in \cE_b$. 

We extend the tractor connection to the tensor product bundle 
$\bigl( \bigotimes \cT \bigr) \otimes \bigl( \bigotimes \cT^* \bigr)$ by the
Leibniz rule. The resulting connection is again flat, denoted by $\na$ and
termed the tractor connection. 
Further, the tractor and Levi-Civita connections induce the connection
\begin{eqnarray}
\na_a: \cE_{b \ldots d} \otimes \Ga(W) 
\to \cE_{ab \ldots d} \otimes \Ga(W)
\end{eqnarray}
for any tractor subbundle $W \subseteq 
\bigl( \bigotimes \cT \bigr) \otimes \bigl( \bigotimes \cT^* \bigr)$, i.e., any subbundle preserved 
by the tractor connection.
This coupled Levi-Civita--tractor connection allows to extend all natural operators, e.g.\ the
Laplace operator $\De$, to tensor-tractor bundles.

The invariant pairing on $\cA$ induced by \nn{stdpairing}
is given by the formula
\begin{eqnarray} \label{adpairing}
\langle
\begin{pmatrix}
\varphi^b\\ \psi^{bc}
\end{pmatrix},
\begin{pmatrix}
\bar{\varphi}^b\\ \bar{\psi}^{bc}
\end{pmatrix} \rangle = 
\frac{1}{n} \rJ \ph^a\bar{\ph}_a + \psi^{ab} \bar{\psi}_{ab}.
\end{eqnarray}
For $\rJ \not=0$, this defines a metric on $\cA$  and $\cA \cong \cA^*$. 
Moreover, there is a Lie algebra structure
$[\cdot,\cdot]: \cA \otimes \cA \to \cA$ given by 
\begin{eqnarray} \label{bracket}
\left[
\begin{pmatrix}
\varphi^b\\ \psi^{bc}
\end{pmatrix},
\begin{pmatrix}
\bar{\varphi}^b\\ \bar{\psi}^{bc}
\end{pmatrix} \right] = 
\begin{pmatrix}
\varphi_r \bar{\psi}^{ra} - \bar{\varphi}_r \psi^{ra} \\
-2 \psi^{r[b} \bar{\psi}_r{}^{c]} - \frac{1}{n}\rJ \varphi^{[b} \bar{\varphi}^{c]} 
\end{pmatrix}
\end{eqnarray}
which is also invariant under the tractor connection. 

\begin{remark}\label{Rmk:Killingvf}
Tractor connections can be defined on any Riemannian manifold. For example
for  $\varphi^b\in \cE^b$ and $\psi^{bc} \in \cE^{[bc]}$, we can define $\na$ 
on $\cA$ by
\begin{equation*}
\nabla_a\begin{pmatrix}
\varphi^b\\ \psi^{bc}
\end{pmatrix}
=\begin{pmatrix}
\nabla_a \varphi^b  - 2 \psi_a{}^b \\
\nabla_a \psi^{bc} + \frac12 R^{bc}{}_{as} \varphi^s
\end{pmatrix}.
\end{equation*}
This definition originates in the work by  B.\ Kostant \cite{Kostant}. 
We observe that, for a Killing vector field 
$k^a \in \cE^a$, its prolongation
\begin{eqnarray}\label{Prol-killing}
K = \begin{pmatrix}
k^a \\ \frac12 \na^{[a}k^{b]}
\end{pmatrix} \in \Ga(\cA)
\end{eqnarray}
is parallel for the tractor connection. Hence, any isometry is 
locally determined by its first jet. 
\end{remark}

We shall use the abstract index notation for the adjoint tractor bundle as
follows: $\Ga(\cT)$ will be denoted by $\cE^A$ and 
$\Ga(\cA)$ will be denoted by $\cE^\form{A}$ where $\form{A} = [A^1A^2]$.
Similarly $\cE_A = \Ga(\cA^*)$ and $\cE_\form{A} = \Ga(\cA^*)$. 
That is, we use bold face capital indices for adjoint tractor bundle and its 
dual.

There is a convenient way to treat bundles $\cT$ and $\cT^*$, based on the 
so-called \idx{injectors}, or tensor-tractor frame, denoted by 
$Y^A, Z^A_a$ for $\cT$ and denoted by $Y_A, Z^a_A$ for $\cT^*$. 
These are defined by
\begin{eqnarray} \label{YZstd}
& \begin{pmatrix}
f \\ \mu^b
\end{pmatrix}
=Y^A f + Z_b^A \mu^b\, , \quad
\begin{pmatrix}
\nu_b \\ f
\end{pmatrix}=
Z_A^b \nu_b
+ Y_Af
\end{eqnarray}
and their contractions are $Y^AY_A=1$, $Z^A_a Z_A^b = \de_a^b$ and 
$Y^A Z_A^b = Z^A_a Y_A =0$.
The covariant derivatives in \nn{stdtractor} and \nn{stdtractor*} are then 
encoded in covariant derivatives of these injectors,
\begin{eqnarray} \label{tracconYZstd}
\begin{split} 
&\nabla_c Y^A=\frac{2}{n}\rJ Z_a^A\delta^{a}_c, 
&&&& \nabla_c Z_a^A= - Y^A g_{ac}^{}, \\
&\nabla_c Z^a_A=
- \frac{2}{n}\rJ Y_A \delta^{a}_c, 
&&&& \nabla_c Y_A = Z^a_A g_{ca}.
\end{split}
\end{eqnarray}
We denote the tractor pairing \nn{stdpairing} by 
$h_{AB} \in \cE_{(AB)}$ which has the explicit form 
\begin{eqnarray}
& h_{AB}=
\frac{2}{n}\rJ\, Y_A Y_B
+ Z^a_A Z^b_B g_{ab}\, .
\end{eqnarray}

Injectors for the adjoint tractor bundle $\cE^\form{A}$ are 
$\Y^{\bf A}_{\,a} = Y^{[A^1}Z^{A^2]}_{\,a}$ and 
$\Z^{\bf A}_{\,\form{a}} = Z^{[A^1}_{\,a^1}Z^{A^2]}_{\,a^2}$, 
injectors for the dual bundle $\cE_\form{A}$ are 
$\Y_\form{A}^{\,a} = Y_{[A^1}^{} Z_{A^2]}^{\,a}$ and 
$\Z_{\bf A}^{\,\form{a}} = Z_{[A^1}^{\,a^1} Z_{A^2]}^{\,a^2}$. That is,
\begin{eqnarray} \label{YZ}
& \begin{pmatrix}
\varphi^b \\ \psi^{\form{a}}
\end{pmatrix}
=\varphi^b\Y^{\bf A}_b+\psi^{\form{a}}\Z_{\,\form{a}}^{\bf A}\, \quad
\begin{pmatrix}
\mu_{\form{a}} \\ \om_b
\end{pmatrix}=
\mu_{\form{a}}\Z^{\,\bf a}_{\bf A}
+ \om_b\Y^{\, b}_{\bf A}, 
\end{eqnarray}
where $\form{a} = [a^1a^2]$. The only nonzero contractions are
$\Y^{\bf A}_{\,a} \Y_\form{A}^{\,b} = \frac12 \de_a^b$ and 
$\Z^{\bf A}_{\,\form{a}} \Z_{\bf A}^{\,\form{c}} = 
\de_{[a^1}^{c^1} \de_{a^2]}^{c^2}$. The covariant derivatives \nn{traccon} 
are then equivalent to
\begin{eqnarray} \label{tracconYZ}
\begin{split} 
&\nabla_c\Y_{\, b}^{\bf A}=\frac{2\rJ}{n}\Z_{ab}^{\bf A}\delta^{a}_c, 
&&&& \nabla_c\Z_{\,\form{a}}^{\bf A}=-2\Y_{[a^2}^{\bf A}g_{a^1]c}^{}, \\
&\nabla_c\Z^{\,\form{a}}_{\bf A}=
-\frac{4\rJ}{n}\Y^{[a^2}_{\bf A}\delta^{a^1]}_c, 
&&&& \nabla_c\Y^{\,b}_{\bf A}=\Z^{ab}_{\bf A}g_{ca}.
\end{split}
\end{eqnarray}
and the pairing  \nn{adpairing} on $\cE^\form{A}$ can be written as
\begin{eqnarray} \label{tracmetricforms}
& h_{\form{A}\form{B}}=
\frac{4}{n}\rJ\, \Y^{\,a}_{\form{A}}\Y^{\,b}_\form{B}g_{ab}
+\Z^{\,\form{a}}_{\form{A}} \Z^{\form{b}}_{\form{B}} g_{a^1b^1} g_{a^2b^2}\, .
\end{eqnarray}

A crucial ingredient in our construction will be the differential operator
\begin{equation} \label{doubleD}
\D_\form{A}: \cE_{b_1 \ldots b_s} \otimes \Ga(W) 
\to \cE_{b_1 \ldots b_s} \otimes \Ga(\cA^* \otimes W)
\end{equation}
for a tractor subbundle 
$W \subseteq \bigl( \bigotimes \cT \bigr) \otimes \bigl( \bigotimes \cT^* \bigr)$.
This operator is closely related to the so-called fundamental derivative \cite{CSbook}.
It is defined as follows: for $f \in \Gamma(W)$, we put
\begin{equation} \label{doubleD1}
\quad \D_{\bf A} f =  \begin{pmatrix}
0 \\ 2\na_a f
\end{pmatrix} \in \Ga(\cA^* \otimes W), 
\end{equation}
and for $\varphi_b \in \cE_b$, we put 
\begin{equation} \label{doubleD2}
\D_{\bf A} \varphi_b = \begin{pmatrix}
2g_{b[a^1}\varphi_{a^2]} \\ 2\na_a \varphi_b
\end{pmatrix} \in \cE_b \otimes \Ga(\cA^*).
\end{equation}
Then we extend $\D_{\bf A}$ to all tensor-tractor bundles by the Leibniz rule.
Using injectors \nn{YZ}, the formulas \nn{doubleD1} and \nn{doubleD2} 
are given by 
\begin{eqnarray} \label{doubleD12}
\begin{split}
& \D_{\bf A}f=2\Y_{\bf A}^{\,a} \nabla_af, \\
& \D_{\form{A}}\varphi_b=2\Y_{\bf A}^{\,a}\nabla_a\varphi_b
+\Z_{\bf A}^{\,\form{a}}2g_{b[a^0}\varphi_{a^1]}
\end{split}
\end{eqnarray}
where $\form{a} = [a^1a^2]$. 

\begin{theorem} \label{Dcomm}
Let $M$ be a manifold of constant curvature. 
The operator $\D_{\bf A}$ commutes with 
the coupled Levi-Civita--tractor connection~$\nabla_c$, 
$$
\nabla_c \D_{\bf A} = \D_{\bf A}\nabla_c: 
\cE_{b_1 \ldots b_s} \otimes \Ga(W) \to 
\cE_{c{\form{A}}b_1 \ldots b_s} \otimes \Ga(W).
$$
\end{theorem}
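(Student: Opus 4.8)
The plan is to use that both $\nabla_c$ and $\D_\form{A}$ act as derivations over tensor products, reduce the claimed identity to a check on generating sections, and then dispatch those base cases by a short curvature computation. Throughout I would write $\D_\form{A}=2\Y_\form{A}^{\,a}\nabla_a+\Theta_\form{A}$, reading off from \nn{doubleD12} the splitting into a differential part $2\Y_\form{A}^{\,a}\nabla_a$ and an algebraic (zeroth-order) part $\Theta_\form{A}$; by construction $\Theta_\form{A}$ annihilates every tractor slot $W$, acts on a single cotangent index by $\varphi_b\mapsto 2\Z_\form{A}^{\,\form{a}}g_{b[a^1}\varphi_{a^2]}$, and is extended to all tensor--tractor bundles as a derivation.

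Next I would set $C_{c\form{A}}:=\nabla_c\D_\form{A}-\D_\form{A}\nabla_c$ and verify that $C_{c\form{A}}$ is again a derivation over tensor products. Since $\nabla_c$ is a connection and $\D_\form{A}$ is extended by the Leibniz rule, expanding $\nabla_c\D_\form{A}(u\otimes v)$ and $\D_\form{A}\nabla_c(u\otimes v)$ produces the same two mixed terms $(\D_\form{A}u)\otimes\nabla_c v$ and $(\nabla_c u)\otimes\D_\form{A}v$, which therefore cancel in the difference and leave $C_{c\form{A}}(u\otimes v)=(C_{c\form{A}}u)\otimes v+u\otimes(C_{c\form{A}}v)$. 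As the bundle $\cE_{b_1\ldots b_s}\otimes\Ga(W)$ is generated under tensor products by one-forms $\varphi_b\in\cE_b$ and by pure tractor sections $f\in\Gamma(W)$, it then suffices to check $C_{c\form{A}}=0$ on these two types of section.

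Treating $f\in\Gamma(W)$ first, one has $\Theta_\form{A}f=0$, while $\Theta_\form{A}(\nabla_c f)$ contributes through the new cotangent index created by $\nabla_c$; using $\nabla_c\Y_\form{A}^{\,a}=\Z_\form{A}^{\,ea}g_{ce}$ from \nn{tracconYZ} all first-order terms cancel and one is left with $C_{c\form{A}}f=2\Y_\form{A}^{\,a}[\nabla_c,\nabla_a]f$, which vanishes because the tractor connection is flat. For a one-form $\varphi_b$ the same cancellation of first-order terms reduces the commutator to $C_{c\form{A}}\varphi_b=2\Y_\form{A}^{\,a}[\nabla_c,\nabla_a]\varphi_b+2(\nabla_c\Z_\form{A}^{\,\form{a}})g_{b[a^1}\varphi_{a^2]}$, that is, to a genuine curvature term plus the algebraic term coming from $\nabla_c\Z_\form{A}^{\,\form{a}}=-\tfrac{4\rJ}{n}\Y_\form{A}^{[a^2}\delta^{a^1]}_c$ in \nn{tracconYZ}. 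Substituting the constant-curvature expression $R_{abcd}=\tfrac{4}{n}\rJ\,g_{c[a}g_{b]d}$ into the former and expanding the antisymmetrizations in the latter, both reduce to combinations of $\rJ\,\Y_{\form{A}b}\varphi_c$ and $\rJ\,g_{bc}\Y_\form{A}^{\,a}\varphi_a$ with opposite signs, so they cancel and $C_{c\form{A}}\varphi_b=0$.

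I expect this last cancellation for the cotangent index to be the main obstacle: it is precisely here that the normalization $\tfrac{2}{n}\rJ$ of the tractor connection \nn{stdtractor} and the constant-curvature normalization of $R_{abcd}$ must conspire. This also makes transparent why constant curvature is indispensable, since a Weyl contribution to $R_{abcd}$ (equivalently, a nonflat tractor connection) would spoil both base cases.
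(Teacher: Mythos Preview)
Your argument is correct and is essentially the paper's first proof: reduce via the Leibniz rule to the generating cases and verify these by direct computation using \nn{tracconYZ}, \nn{doubleD12}, flatness of the tractor connection, and the constant-curvature form of $R_{abcd}$. The paper also records a second, more conceptual proof you did not give: for a Killing vector field $k^a$ the prolongation $K^{\form{A}}$ is parallel and $K^{\form{A}}\D_{\form{A}}=L_k$ is the Lie derivative, so $[\nabla_c,\D_{\form{A}}]=0$ follows from $[L_k,\nabla_c]=0$ together with the fact that on a constant-curvature manifold the Killing fields span a space of dimension $\dim\cA$.
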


\begin{proof}
Since the tractor connection is flat, it is sufficient
to prove the statement for $W$ equal to the trivial line bundle. We present two versions 
of the proof. 

First, one can easily show by direct
computation (using \nn{tracconYZ} and \nn{doubleD12}) that the
explicit formulas for the compositions $\nabla_c \D_{\bf A}$ and $\D_{\bf A}\nabla_c$ 
are the same when acting on $f\in \cE$ and 
$\varphi_b \in \cE_b$. Hence the formulas agree on any tensor bundle.

Alternatively, recall that for a Killing vector field $k^a \in \cE^a$, its prolongation 
$K^{\bf A} \in \Ga(\cA)$ is parallel, see \eqref{Prol-killing}. 
We further observe that 
$L_k =  K^{\bf A} \D_{\bf A}$ is the Lie derivative along $k^a$ when acting on 
tensor bundles. Since $L_k$ commutes with the covariant derivative and the 
space of Killing vector fields on manifolds with constant curvature 
has dimension equal to $\dim(\cA) = n + \frac12 n(n-1)$, the statement follows.
\end{proof}

As a consequence of the previous theorem, $\D_{\bf A}$ commutes also with the Laplace
operator $\De$ on functions, forms, etc. Let us make this result more general and more precise.
Assume $F: \Ga(U_1) \to \Ga(U_2)$ is a Riemannian invariant linear differential 
operator, acting between tensor bundles $U_1$, $U_2$. Then, it can be written in terms
of the metric, the Levi-Civita connection $\na$ and the curvature $\rJ$. 
Regarding $\na$ in the formula for $F$ as the 
coupled Levi-Civita--tractor connection, we obtain the operator 
$F^\na: \Ga(\cA^* \otimes U_1) \to \Ga(\cA^* \otimes U_2)$. 
Using Theorem \ref{Dcomm} and $\nabla_a\rJ=\nabla_a g=0$, we get the following

\begin{corollary} \label{dDcomm}
Let $F: \Ga(U_1) \to \Ga(U_2)$ be a Riemannian invariant linear differential 
operator on the manifold $M$. Then $\D_{\bf A}$ commutes with $F$, i.e.,
$$
\D_{\bf A} \circ F = F^\na \circ \D_{\bf A}: \Ga(U_1) \to \Ga(\cA^* \otimes U_2).
$$
\end{corollary}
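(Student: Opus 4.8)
The plan is to reduce the statement to Theorem~\ref{Dcomm} by exploiting the fact that a Riemannian invariant operator $F$ is \emph{universal}: it is built from the metric $g$, its inverse, the Levi-Civita connection $\na$, and the curvature scalar $\rJ$, combined by tensor products and contractions. The key point is that the operator $F^\na$ is obtained from the very same formula simply by reinterpreting every occurrence of $\na$ as the coupled Levi-Civita--tractor connection on $\cA^* \otimes U_i$. So to prove $\D_{\bf A}\circ F = F^\na \circ \D_{\bf A}$ it suffices to show that $\D_{\bf A}$ commutes with each of the elementary building blocks of $F$, and then to propagate this through the composition, tensor products, and contractions that assemble $F$ from those blocks.

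First I would record the commutation of $\D_{\bf A}$ with the basic ingredients. For the connection $\na_c$ this is exactly the content of Theorem~\ref{Dcomm}. For the structural data---the metric $g_{ab}$, its inverse $g^{ab}$, and the constant $\rJ$---the hypothesis $\na_a \rJ = \na_a g = 0$ is what matters: since $\D_{\bf A}$ is, up to the extra $\Z$-valued zeroth-order piece, built out of $\na$ via \eqref{doubleD12}, and since $\D_{\bf A}$ acts by the Leibniz rule, applying $\D_{\bf A}$ to a tensor multiplied by $g$ or $\rJ$ introduces only terms proportional to $\D_{\bf A} g$ and $\D_{\bf A}\rJ$, both of which vanish (the former because $2g_{b[a^1}g_{\cdots a^2]}$-type contributions cancel by the parallelism of $g$ together with the skew-symmetrization, the latter because $\rJ$ is a parallel constant). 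Hence $\D_{\bf A}$ commutes with multiplication by $g$, $g^{-1}$, and $\rJ$, and with metric contractions.

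Next I would assemble these facts. Because both $\D_{\bf A}$ and the formation of $F^\na$ respect tensor products and contractions via the Leibniz rule, and because $F$ is by definition a finite composition of $\na$'s interspersed with tensor products against $g$, $g^{-1}$, $\rJ$ and contractions, the commutation relation propagates verbatim from the building blocks to $F$ itself. Concretely, writing $F$ as a word in these generators and inducting on the length of the word, each step either invokes Theorem~\ref{Dcomm} to move $\D_{\bf A}$ past a $\na$ (at the cost of reinterpreting that $\na$ as the coupled connection, which is precisely the passage $F \mapsto F^\na$) or invokes the parallelism of $g$ and $\rJ$ to move $\D_{\bf A}$ past a structural factor unchanged. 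Collecting the reinterpreted $\na$'s yields exactly $F^\na \circ \D_{\bf A}$.

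The main obstacle is conceptual rather than computational: one must be careful that ``Riemannian invariant differential operator'' genuinely guarantees a universal polynomial expression in $\na$, $g$, $g^{-1}$, and $\rJ$ with constant coefficients, so that the rewriting $F \mapsto F^\na$ is well-defined and independent of the chosen presentation. For this I would appeal to the classical description of natural operators on pseudo-Riemannian manifolds; the restriction to constant curvature ensures the Weyl tensor and all non-scalar curvature contributions are absent, so $\rJ$ is the only curvature quantity that can appear, and it is parallel. Once universality is granted, the proof is the short propagation argument above, and the computation is genuinely routine.
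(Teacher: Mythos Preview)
Your argument is correct and is essentially the paper's own proof: the paper simply notes that $F$ is built from $g$, $\na$, and $\rJ$, and then invokes Theorem~\ref{Dcomm} together with $\nabla_a g = \nabla_a \rJ = 0$ to conclude. You have spelled out the induction and the Leibniz-rule bookkeeping in more detail than the paper does, but the approach is the same.
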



\section{Commuting symmetries of the Laplace operator}
\begin{definition}
Let $U$ be a tensor bundle and $F: \Ga(U) \to \Ga(U)$ be a linear 
differential operator on $M$. A \idx{commuting symmetry} of the operator $F$ 
is a linear differential operator ${\fam2 D}$ fulfilling 
${\fam2 D}F=F{\fam2 D}$.
\end{definition}

We are interested in commuting symmetries of the Laplace operator $F=\De$
on functions. They form a subalgebra of the associative algebra of linear differential 
operators acting on $\cE$.
The commuting symmetries are particular case of the conformal symmetries 
of $\De$, studied in \cite{east}. 
The exposition in the rest of the
section closely follows the one given in \cite{east}.

Let $\ell$ be a non-negative integer.
A linear $\ell$-th order differential operator acting on functions can be written as 
\begin{eqnarray}\label{gfdo}
{\fam2 D}=V^{a_1\dots a_\ell}\nabla_{a_1}\dots\nabla_{a_\ell}+LOTS,
\end{eqnarray}
where "LOTS" stands for lower order terms in ${\fam2 D}$ and 
its principal symbol $V^{a_1\dots a_\ell}$ is symmetric in its indices,
$V^{a_1\dots a_\ell}=V^{(a_1\dots a_\ell)}$.

\begin{definition}
A Killing tensor on $M$ is a symmetric tensor field 
$V^{a_1\dots a_\ell}$, 
fulfilling the first order differential equation 
\begin{eqnarray}\label{kte}
\nabla^{(a_0}V^{a_1\dots a_\ell)}=0.  
\end{eqnarray}  
The vector space of all Killing tensors of valence $\ell$ will be denoted 
${\cK}_{\ell}$.  
\end{definition}
Since the differential equation (\ref{kte}) is overdetermined, the space
$\cK_\ell$ is finite dimensional. Note that the symmetric product of two
Killing tensors is again a Killing tensor, so that $\bigoplus_{\ell\leq 0}\cK_\ell$
is a commutative graded algebra.

\begin{theorem}\label{th1}
Let ${\fam2 D}$ be a $\ell$-th order commuting symmetry of the Laplace 
operator. Then the principal symbol $V^{a_1\dots a_\ell}$ of ${\fam2 D}$ is a
Killing tensor of valence $\ell$.
\end{theorem}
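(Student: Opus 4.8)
The plan is to compute the leading-order term of the commutator $[\Delta, \mathcal{D}]$ and show that the vanishing of its top-order coefficient is exactly the Killing equation \eqref{kte}. Writing $\mathcal{D} = V^{a_1\dots a_\ell}\nabla_{a_1}\cdots\nabla_{a_\ell} + LOTS$ as in \eqref{gfdo}, both $\Delta \mathcal{D}$ and $\mathcal{D}\Delta$ are differential operators of order $\ell+2$. Their principal symbols of order $\ell+2$ coincide (both equal $g^{bc}\xi_b\xi_c\, V^{a_1\dots a_\ell}\xi_{a_1}\cdots\xi_{a_\ell}$ in symbol notation), so the commutator $[\Delta,\mathcal{D}]=0$ automatically has order at most $\ell+1$. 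The real content is in the symbol of order $\ell+1$, which I expect to be governed precisely by the symmetrized derivative $\nabla^{(a_0}V^{a_1\dots a_\ell)}$.

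First I would isolate this $(\ell+1)$-st order symbol. Acting on a function $u$, the highest-order terms of $\Delta \mathcal{D} u$ and $\mathcal{D}\Delta u$ that survive cancellation come from the places where a derivative in $\Delta=g^{bc}\nabla_b\nabla_c$ lands on the coefficient $V$ rather than on $u$. Carefully collecting these, the order-$(\ell+1)$ part of $[\Delta,\mathcal{D}]u$ is a constant multiple of $(\nabla_b V^{a_1\dots a_\ell})\nabla^b\nabla_{a_1}\cdots\nabla_{a_\ell}u$, which upon full symmetrization of the $\ell+1$ free derivative indices $b,a_1,\dots,a_\ell$ (legitimate since they all act on $u$ and the top-order derivatives on a function commute) becomes proportional to $(\nabla^{(a_0}V^{a_1\dots a_\ell)})\nabla_{a_0}\nabla_{a_1}\cdots\nabla_{a_\ell}u$. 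Since $[\Delta,\mathcal{D}]=0$ forces this $(\ell+1)$-st order symbol to vanish identically, and the iterated symmetrized derivatives $\nabla_{(a_0}\cdots\nabla_{a_\ell)}u$ can be made linearly independent at any point by choosing $u$ appropriately, we conclude $\nabla^{(a_0}V^{a_1\dots a_\ell)}=0$, which is \eqref{kte}.

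The main subtlety, and the step I would treat most carefully, is the bookkeeping of the lower-order contributions: a priori $LOTS$ in $\mathcal{D}$ could contribute at order $\ell+1$ to the commutator, and one must verify it does not. The reason it cannot is structural: commuting $\Delta$ past a differential operator of order $\leq \ell-1$ produces terms of order $\leq \ell$, strictly below $\ell+1$, so the only order-$(\ell+1)$ contribution to $[\Delta,\mathcal{D}]$ arises from the principal part $V^{a_1\dots a_\ell}\nabla_{a_1}\cdots\nabla_{a_\ell}$ of $\mathcal{D}$. Thus the $(\ell+1)$-st order symbol depends only on $V$ and its first covariant derivative, and the argument above applies cleanly. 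A secondary point is that on a curved manifold the covariant derivatives $\nabla_{a_1},\dots,\nabla_{a_\ell}$ do not commute, but the resulting commutator terms involve the curvature algebraically and lower the differential order by one, so they too are pushed into the lower-order bookkeeping and do not affect the top symbol; this is where the constant-curvature hypothesis keeps the error terms under explicit control, though the statement itself is really about the leading symbol and holds on any pseudo-Riemannian manifold.
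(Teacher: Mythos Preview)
Your argument is correct and follows exactly the same approach as the paper, which simply records the computation $\Delta\mathcal{D}-\mathcal{D}\Delta = 2(\nabla^b V^{a_1\dots a_\ell})\nabla_b\nabla_{a_1}\cdots\nabla_{a_\ell} + \text{LOTS}$ and concludes. You have merely expanded the bookkeeping (why LOTS and curvature commutators do not contribute at order $\ell+1$) that the paper leaves implicit.
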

\begin{proof}
When ${\fam2 D}$ is of the form (\ref{gfdo}), we compute
\begin{eqnarray}
\De{\fam2 D}-{\fam2 D}\De =2(\nabla^bV^{a_1\dots a_\ell})\nabla_b\nabla_{a_1}\dots\nabla_{a_\ell}
+"LOTS",
\end{eqnarray}
and the claim follows.
\end{proof}

The converse statement is the content of the following Theorem. 

\begin{theorem}\label{th2}
There exists a linear map,
$V^{a_1\dots a_\ell}\mapsto {\fam2 D}^{V}$, 
from  symmetric tensor fields to differential operators,
such that the principal symbol of ${\fam2 D}^{V}$ is $V^{a_1\dots a_\ell}$ 
and ${\fam2 D}^{V}\De=\De{\fam2 D}^{V}$ if $V$ is a Killing tensor.
\end{theorem}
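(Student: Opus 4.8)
The plan is to construct the operator $\fam2 D^V$ explicitly using the tractor machinery developed in Section~3, exploiting the fact that the operators $\D_{\bf A}$ commute with the Laplacian. The key idea is that a Killing tensor $V^{a_1\dots a_\ell}$ can be encoded, via a prolongation, into a single parallel tractor object living in a suitable tractor bundle, and that contracting this parallel object against an $\ell$-fold composition of the invariant operators $\D_{\bf A}$ produces a differential operator on $\cE$ whose leading symbol is $V$ and which commutes with $\De$.

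First I would set up the prolongation. For a Killing tensor of valence $\ell$, the overdetermined system $\nabla^{(a_0}V^{a_1\dots a_\ell)}=0$ on a constant-curvature manifold should close up into a parallel section $\mathbb V$ of the tractor bundle $\bigodot^\ell \cA$ (the $\ell$-fold symmetric power of the adjoint tractor bundle), whose lowest slot, extracted via the injectors $\Z^{\bf A}_{\,\form a}$, recovers $V^{a_1\dots a_\ell}$. This is where one uses that the tractor connection is flat and that $\rJ$ is parallel; the prolongation connection on $\bigodot^\ell\cA$ should coincide with the tractor connection after the appropriate modification, so that solutions of the Killing equation correspond exactly to $\na$-parallel tractors. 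This identification is precisely the representation-theoretic statement alluded to in the introduction (the space $\cK_\ell$ as a $GL$-representation), so I would lean on the structure of $\bigodot^\ell\cA$ here.

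Next I would define $\fam2 D^V$ by contracting the parallel tractor $\mathbb V \in \Ga(\bigodot^\ell\cA)$ with the composition $\D_{\bf A_1}\cdots\D_{\bf A_\ell}$ acting on $\cE$, i.e.\ schematically $\fam2 D^V f = \mathbb V^{\form A_1\dots\form A_\ell}\,\D_{\bf A_1}\cdots\D_{\bf A_\ell}f$, suitably symmetrized. The principal-symbol computation is routine: since $\D_{\bf A}f = 2\Y_{\bf A}^{\,a}\nabla_a f$ on functions by~\nn{doubleD12}, the top-order part of the $\ell$-fold composition contracts against the $\Z$-slot of $\mathbb V$ to give $V^{a_1\dots a_\ell}\nabla_{a_1}\cdots\nabla_{a_\ell}$ up to a nonzero constant, after checking that the $\Y$-contractions~$\Y^{\bf A}_{\,a}\Y_\form{A}^{\,b}=\tfrac12\de_a^b$ feed the symmetric tensor indices into the derivatives correctly. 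For the commutation $\fam2 D^V\De=\De\fam2 D^V$, I would argue that each $\D_{\bf A}$ commutes with $\De$ on all the relevant tensor-tractor bundles (this is the consequence of Theorem~\ref{Dcomm} and Corollary~\ref{dDcomm}, since $\De$ is a Riemannian invariant operator), and that $\mathbb V$ is parallel hence commutes through both $\na$ and $\De$; thus the whole composition commutes with $\De$.

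The main obstacle I expect is the prolongation step itself: verifying that the Killing equation on a constant-curvature space closes up to a genuine parallel section of $\bigodot^\ell\cA$ with $V$ in the correct slot, and pinning down the exact algebraic projection/symmetrization needed so that (i) the lowest slot is $V$ and (ii) no lower-order obstruction spoils the commutation. Concretely, the subtlety is that $\D_{\bf A}$ raises tensor valence (it lands in $\cE_b\otimes\Ga(\cA^*)$ with an extra form index), so one must check that contracting against $\mathbb V$ kills exactly the unwanted trace and symmetry components, and that the curvature terms proportional to $\rJ$ arising from reordering the $\D$'s and from~\nn{tracconYZ} assemble into precisely the Killing-tensor prolongation and no spurious remainder. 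Handling these algebraic cancellations, rather than the conceptual commutation argument, is where the real work lies; the flatness of $\na$ and the parallelism of $\mathbb V$ are what ultimately guarantee they vanish.
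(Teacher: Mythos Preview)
Your proposal is correct and follows essentially the same route as the paper: define ${\fam2 D}^V = \langle \Pi^{(\ell)}V,\,\D^{(\ell)}\rangle$, read off the principal symbol from the slot structure, and deduce commutation with $\De$ from parallelism of $\Pi^{(\ell)}V$ together with Corollary~\ref{dDcomm}. One small slip: the tensor $V$ sits in the $\Y$-slot of $\Pi^{(\ell)}V$ (the top slot of $\cA$), not the $\Z$-slot, and it is the $\Y$--$\Y$ pairing $\Y^{\bf A}_{\,a}\Y_{\bf A}^{\,b}=\tfrac12\de_a^b$ that produces the leading term $V^{a_1\dots a_\ell}\nabla_{a_1}\!\cdots\nabla_{a_\ell}$.
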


The proof of the Theorem is postponed to the next section, where 
the Riemannian prolongation connection for symmetric powers of the adjoint 
tractor bundle is constructed. This allows to compute explicitly 
the symmetry operators ${\fam2 D}^V$.

Combining both theorems, we deduce a linear bijection between the space
of Killing tensors and the space of commuting symmetries of $\De$.
In particular, the dimension of the vector space 
of $\ell$-th order symmetry operators is finite, equal to $\dim\cK_0+\dim\cK_1+\cdots+\dim \cK_\ell$. 


\subsection{Prolongation for Killing tensors.}
Let $k^a \in \cE^a$ be a Killing vector field. In Remark \ref{Rmk:Killingvf}, 
we observed that its prolongation $K = (k^a, \frac{1}{2} \na^{[a}k^{b]}) \in 
\Ga(\cA)$ 
is parallel for the tractor connection.
Our aim is to construct analogous prolongation for Killing tensors.

\begin{lemma} \label{identity}
If $k^{a_1 \ldots a_\ell}\in\cE^{(a_1 \ldots a_\ell)}$ is a Killing tensor then
\begin{eqnarray}
\na^{(a_1} \na^{|[c} k^{d]|a_2 \ldots a_\ell)} = 
-\frac{2(\ell+1)}{n} \rJ \, g^{(a_1|[c} k^{d]|a_2 \ldots a_\ell)},
\end{eqnarray}
where the notation $|\ldots|$ means the enclosed indices $c$, $d$ 
are excluded from the symmetrization.
\end{lemma}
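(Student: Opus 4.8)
The plan is to derive the identity as a two-step prolongation of the Killing equation \eqref{kte}, using only its first covariant derivative together with the Ricci identity on a constant curvature space. The essential preliminary observation is a \emph{trading relation}: expanding the full symmetrization in \eqref{kte} over the $\ell+1$ indices and using that $k^{a_1\ldots a_\ell}$ is already symmetric, one isolates the derivative index to obtain
\begin{equation*}
\na^{(a_1}k^{|d|a_2\ldots a_\ell)}=-\tfrac1\ell\,\na^d k^{a_1\ldots a_\ell}.
\end{equation*}
This converts a derivative symmetrized with the tensor indices into a single derivative carrying the free index $d$, at the cost of a factor $-1/\ell$.

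First I would commute the two derivatives on the left-hand side,
\[
\na^{a_1}\na^{c}k^{d\,a_2\ldots a_\ell}=\na^{c}\na^{a_1}k^{d\,a_2\ldots a_\ell}+[\na^{a_1},\na^{c}]k^{d\,a_2\ldots a_\ell}.
\]
Since $\na^c$ carries none of the indices $a_1,\ldots,a_\ell$, it passes through the symmetrization, so after symmetrizing over $(a_1\ldots a_\ell)$ the trading relation turns the first term into $-\tfrac1\ell\,\na^{c}\na^{d}k^{a_1\ldots a_\ell}$. Antisymmetrizing in $[cd]$ then collapses this ``main term'' to the pure commutator $-\tfrac{1}{2\ell}[\na^{c},\na^{d}]k^{a_1\ldots a_\ell}$, which is entirely curvature. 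At this point every surviving contribution is a curvature term, and I would substitute the constant curvature expression $R_{ab}{}^c{}_d=\tfrac2n\rJ(\de^c_a g_{bd}-\de^c_b g_{ad})$ coming from \eqref{Rriem} and the standing assumption on $M$.

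The remaining work is to evaluate, symmetrize and antisymmetrize the two batches of curvature terms. The main term contributes $-\tfrac2n\rJ\,g^{(a_1|[c}k^{d]|a_2\ldots a_\ell)}$, independently of $\ell$. In the commutator term $[\na^{a_1},\na^c]k^{d\,a_2\ldots a_\ell}$ one gets one contribution per index of $k$: the distinguished index $d$ again yields $-\tfrac2n\rJ\,g^{(a_1|[c}k^{d]|a_2\ldots a_\ell)}$ (the $g^{cd}$ piece drops because it is symmetric in $[cd]$), while each of the remaining $\ell-1$ indices contributes another copy of the same tensor. Summing gives $-\tfrac{2\ell}{n}\rJ\,g^{(a_1|[c}k^{d]|a_2\ldots a_\ell)}$, and together with the main term this produces the asserted coefficient $-\tfrac{2(\ell+1)}{n}\rJ$.

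The step I expect to be the main obstacle is precisely this last bookkeeping: one must verify that the ``trace-type'' terms in which the metric couples two symmetrized indices, $g^{a_1 a_j}$, vanish after the $[cd]$ antisymmetrization — they do, because symmetry of $k^{a_1\ldots a_\ell}$ makes the two orderings of $c$ and $d$ coincide — and that each of the $\ell-1$ genuinely surviving terms equals, after relabelling the symmetrized indices, $g^{(a_1|[c}k^{d]|a_2\ldots a_\ell)}$. Keeping the raising and lowering of indices consistent throughout, so that $g^{a_1c}$ rather than $\de^c_{a_1}$ appears, is what makes the factor $(\ell+1)$ emerge cleanly. A quick sanity check is the case $\ell=1$, where the trading relation reduces to the antisymmetry of $\na^a k^b$ and the identity becomes the classical second-order prolongation of the Killing vector equation.
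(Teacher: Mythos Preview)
Your proposal is correct and is precisely the ``straightforward computation'' the paper leaves to the reader: differentiate the Killing equation, use the Ricci identity to reduce everything to curvature terms, and substitute the constant curvature form $R_{abcd}=\tfrac{4}{n}\rJ\,g_{c[a}g_{b]d}$. The bookkeeping you outline (the trading relation, the vanishing of the $g^{cd}$ and $g^{a_i a_j}$ pieces under the $[cd]$ antisymmetrization, and the count $1+1+(\ell-1)=\ell+1$) is exactly right.
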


\begin{proof}
A straightforward computation.
\end{proof}

The prolongation of $k^{a_1 \ldots a_\ell}$ will be a section 
$K \in \Ga(\otimes^\ell\cA)$.  As a first step, we observe the following:
\begin{lemma}
The differential operator 
\begin{eqnarray} \label{Pi}
\begin{split}
& \Pi: \cE^{(a_1 \ldots a_\ell)} \to \cE^{(a_1 \ldots a_{\ell-1})} 
\otimes \Ga(\cA), \quad \\
& (\Pi \si)^{a_1 \ldots a_{\ell-1}\form{B}} =
\begin{pmatrix}
\si^{c a_1 \ldots a_{\ell-1}}  \\ 
\frac{1}{\ell+1} \na^{[c}\si^{d]a_1 \ldots a_{\ell-1}}
\end{pmatrix} \in \cE^{(a_1 \ldots a_{\ell-1})} \otimes \Ga(\cA)
\end{split}
\end{eqnarray}
satisfies, for all  $\si^{a_1 \ldots a_{\ell}}\in\cE^{(a_1 \ldots a_\ell)}$,
\begin{equation} \label{step}
\na^{(a_0} \si^{a_1 \ldots a_{\ell})} = 0 \Longleftrightarrow 
\na^{(a_0} (\Pi \si)^{a_1 \ldots a_{\ell-1})\form{B}}=0.
\end{equation}
\end{lemma}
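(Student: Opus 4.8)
The plan is to compute $\na^{(a_0}(\Pi\si)^{a_1\ldots a_{\ell-1})\form{B}}$ directly in the splitting $\cA = TM \oplus \wedge^2 TM$, using the explicit tractor connection formula \nn{traccon} on $\cA$ (the spectator indices $a_1\ldots a_{\ell-1}$ being differentiated by the Levi-Civita part and riding along). Writing the two components of $\Pi\si$ as in \nn{Pi}, namely the $TM$-part $\si^{ca_1\ldots a_{\ell-1}}$ and the $\wedge^2TM$-part $\tfrac1{\ell+1}\na^{[c}\si^{d]a_1\ldots a_{\ell-1}}$, this produces a two-component object whose top ($TM$) entry and bottom ($\wedge^2TM$) entry I will analyze separately. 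The equivalence \nn{step} follows by showing that the top entry is, after rearranging the symmetrization, exactly the Killing expression $\na^{(a_0}\si^{a_1\ldots a_{\ell-1}c)}$, while the bottom entry vanishes identically once $\si$ is Killing.

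First I would treat the top entry. By \nn{traccon} its unsymmetrized form (after raising $a_0$) is $\na^{a_0}\si^{ca_1\ldots a_{\ell-1}}-\tfrac{2}{\ell+1}\na^{[a_0}\si^{c]a_1\ldots a_{\ell-1}}$. Expanding the antisymmetrization and symmetrizing over the $\ell$ tensor indices $a_0,\ldots,a_{\ell-1}$, the elementary peeling identity
\[
\na^{(a_0}\si^{a_1\ldots a_{\ell-1}c)}=\tfrac{\ell}{\ell+1}\,\na^{(a_0}\si^{|c|a_1\ldots a_{\ell-1})}+\tfrac{1}{\ell+1}\,\na^{c}\si^{a_0\ldots a_{\ell-1}},
\]
which merely extracts the index $c$ from the full symmetrization over all $\ell+1$ indices, shows the symmetrized top entry equals $\na^{(a_0}\si^{a_1\ldots a_{\ell-1}c)}$. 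This single computation already yields "$\Longleftarrow$": if the whole left-hand side of \nn{step} vanishes, then so does its top entry, which is the Killing equation for $\si$. It also settles half of "$\Longrightarrow$".

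For the remaining (forward) direction, it suffices to show the bottom entry vanishes when $\si$ is Killing. By \nn{traccon} its unsymmetrized form is $\tfrac{1}{\ell+1}\na^{a_0}\na^{[c}\si^{d]a_1\ldots a_{\ell-1}}+\tfrac{2}{n}\rJ\, g^{a_0[c}\si^{d]a_1\ldots a_{\ell-1}}$; after symmetrizing the second-order term over $a_0,\ldots,a_{\ell-1}$ I would invoke Lemma \ref{identity} (applied to $\si$, of valence $\ell$) to replace $\na^{(a_0}\na^{[c}\si^{d]a_1\ldots a_{\ell-1})}$ by $-\tfrac{2(\ell+1)}{n}\rJ\, g^{(a_0|[c}\si^{d]|a_1\ldots a_{\ell-1})}$. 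The coefficient $\tfrac{1}{\ell+1}$ chosen in the definition \nn{Pi} of $\Pi$ is tuned exactly so that $\tfrac{1}{\ell+1}\cdot(-\tfrac{2(\ell+1)}{n}\rJ)+\tfrac{2}{n}\rJ=0$, whence the bottom entry cancels to zero and the forward implication is complete.

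The only genuine obstacle is bookkeeping: correctly handling the symmetrization over the tensor indices $a_0,\ldots,a_{\ell-1}$ against the antisymmetrization over the tractor pair $c,d$, tracking the combinatorial factors in the peeling identity, and noting that the hypothesis of Lemma \ref{identity} (that $\si$ be Killing) is exactly what is available in the forward direction and is not needed for "$\Longleftarrow$". No new idea is required beyond the peeling identity and Lemma \ref{identity}.
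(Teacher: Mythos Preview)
Your proposal is correct and follows essentially the same route as the paper's proof: both compute $\na^{a_0}(\Pi\si)$ via \nn{traccon}, rewrite the top slot as $\tfrac{\ell}{\ell+1}\na^{a_0}\si^{ca_1\ldots a_{\ell-1}}+\tfrac{1}{\ell+1}\na^{c}\si^{a_0a_1\ldots a_{\ell-1}}$ and observe that its symmetrization over $a_0,\ldots,a_{\ell-1}$ is the Killing expression, and then invoke Lemma~\ref{identity} to kill the bottom slot in the forward direction. Your ``peeling identity'' is exactly the combinatorial step the paper leaves implicit.
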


\begin{proof}
Using \nn{traccon}, we compute
$$
\na^{b} (\Pi \si)^{a_1 \ldots a_{\ell-1}\form{B}} = 
\begin{pmatrix}
\na^{b} \si^{ca_1 \ldots a_{\ell-1}}
- \frac{2}{\ell+1} \na^{[b} \si^{c]a_1 \ldots a_{\ell-1}} \\ 
\frac{1}{\ell+1} \na^b \na^{[c}\si^{d]a_1 \ldots a_{\ell-1}}
+ \frac{2}{n} \rJ \, g^{b[c} \si^{d] a_1 \ldots a_{\ell-1}}
\end{pmatrix}.
$$
Observe the "top slot" on the right hand side is equal to 
$\frac{\ell}{\ell+1} \na^b \si^{c a_1 \ldots a_{\ell-1}}
+ \frac{1}{\ell+1} \na^{c} \si^{ba_1 \ldots a_{\ell-1}}$, which
after the symmetrization $(ba_1 \ldots a_{\ell-1})$ yields exactly
$\na^{(c} \si^{b a_1 \ldots a_{\ell-1})}$. This proves the implication
$\Longleftarrow$ of \nn{step}, and also that if 
$\na^{(a_0} \si^{a_1 \ldots a_{\ell})} =0$ then the "top slot" of 
$\na^{(a_0} (\Pi \si)^{a_1 \ldots a_{\ell-1})}$ vanishes. Since the bottom slot 
vanishes by Lemma \ref{identity}, the implication $\Longrightarrow$
in \nn{step} follows as well.
\end{proof}

Considering $\na$ in the formula \eqref{Pi} as the coupled Levi-Civita--tractor 
connection, we obtain the operator
$\Pi: \cE^{(a_1 \ldots a_\ell)} \otimes \Ga(W) \to 
\cE^{(a_1 \ldots a_{\ell-1})} \otimes \Ga(\cA \otimes W)$,
where $W \subseteq \bigl( \bigotimes \cT \bigr) \otimes 
\bigl( \bigotimes \cT^* \bigr)$ is a tractor subbundle. 
Its iteration 
\begin{equation}\label{Piell}
\Pi^{(\ell)}:\cE^{(a_1 \ldots a_\ell)}\rightarrow \Ga(\otimes^\ell\cA)
\end{equation}
yields the prolongation for Killing tensors.
\begin{proposition} \label{prolong}
Let $\si^{a_1 \ldots a_\ell} \in \cE^{(a_1 \ldots a_\ell)}$. 
The operator $\Pi^{(\ell)}$ satisfies
\begin{eqnarray}\label{Eq-prolongation}
\na^{(a_0} \si^{a_1 \ldots a_{\ell})} = 0 \Longleftrightarrow 
\na \left(\Pi^{(\ell)} \si\right)=0.
\end{eqnarray}
\end{proposition}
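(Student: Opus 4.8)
The plan is to obtain the Proposition by iterating the single-step equivalence \eqref{step} exactly $\ell$ times. I would set $\si_{(0)} := \si$ and $\si_{(j)} := \Pi\si_{(j-1)}$, so that $\si_{(\ell)} = \Pi^{(\ell)}\si$ and at each stage $\si_{(j)} \in \cE^{(a_1 \ldots a_{\ell-j})} \otimes \Ga(\otimes^j \cA)$. Each $W = \otimes^j\cA$ is preserved by the (flat) tractor connection, hence is an admissible tractor subbundle, so the coupled operator $\Pi$ of \eqref{Pi} is defined at every stage. The guiding idea is that one application of $\Pi$ trades one tensor valence for one slot of $\cA$ while preserving the Killing property; after $\ell$ steps no base indices remain, and the Killing equation degenerates into the condition that $\si_{(\ell)}$ be parallel.

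The main obstacle is to verify that the equivalence \eqref{step}, established in the preceding Lemma only for $\si \in \cE^{(a_1 \ldots a_\ell)}$, continues to hold when $\si$ is valued in a tractor subbundle $W$ and $\na$ is read as the coupled Levi-Civita--tractor connection. I would argue that the proof of \eqref{step} rests on two ingredients: the purely algebraic identification of the top slot of $\na^b(\Pi\si)$, which is a symmetrization of indices and is indifferent to the nature of $\na$, and the vanishing of the bottom slot, which uses Lemma \ref{identity}. It is therefore enough to check that Lemma \ref{identity} survives coupling to $W$. Since its proof only commutes a pair of covariant derivatives, the sole input is the curvature of $\na$; and because the tractor connection is flat, the curvature of the coupled connection acts only on the tensor indices $a_i$, where it is the constant-curvature tensor $R_{abcd} = \frac{4}{n}\rJ g_{c[a}g_{b]d}$, and acts trivially on the $W$-slot. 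The computation proving Lemma \ref{identity} then goes through verbatim for $W$-valued Killing tensors, and with it the coupled form of \eqref{step}. This is the one place where flatness of the tractor connection and the constant-curvature hypothesis genuinely enter.

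Granting the coupled equivalence, I would finish by a straightforward chain. Applying \eqref{step} with $W = \otimes^j\cA$ gives, for each $0 \le j \le \ell-1$,
$$\na^{(a_0}\si_{(j)}^{a_1 \ldots a_{\ell-j})} = 0 \Longleftrightarrow \na^{(a_0}\si_{(j+1)}^{a_1 \ldots a_{\ell-j-1})} = 0,$$
and composing these from $j=0$ to $j=\ell-1$ yields
$$\na^{(a_0}\si^{a_1 \ldots a_\ell)} = 0 \Longleftrightarrow \na\si_{(\ell)} = 0,$$
the valence-zero Killing equation on the right having collapsed to $\na\si_{(\ell)}=0$ because no base indices survive the symmetrization. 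As $\si_{(\ell)} = \Pi^{(\ell)}\si$, this is exactly \eqref{Eq-prolongation}.
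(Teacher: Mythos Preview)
Your proof is correct and follows essentially the same approach as the paper: both argue that flatness of the tractor connection allows the single-step equivalence \eqref{step} to be upgraded to the coupled setting, and then iterate $\ell$ times. Your version spells out more carefully why Lemma \ref{identity} survives coupling (the curvature of the coupled connection acts trivially on the tractor slot), whereas the paper compresses this into the single remark that the tractor connection is flat.
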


\begin{proof}
Since the tractor connection is flat, we have the analogue of \nn{step}:
$$
\na^{(a_0} \si^{a_1 \ldots a_{\ell})\bullet} = 0 \Longleftrightarrow 
\na^{(a_0}  (\Pi \si)^{a_1 \ldots a_{\ell-1})\bullet}=0,
$$
for all $\si^{a_1 \ldots a_\ell \bullet} \in \cE^{(a_1 \ldots a_\ell)} 
\otimes \Ga(W)$ where  $\bullet$ denotes an unspecified tractor index.
By iteration, we get then \eqref{Eq-prolongation}.
\end{proof}
The symmetries of the tractor $\Pi^{(\ell)} \si$ are best understood in the language of Young diagrams.
Putting $\cT = \myng{1}$, we have $\cA = \myng{1,1}$ and we set
\begin{equation}\label{boxtimesA}
\boxtimes^\ell\cA := 
\raisebox{-13pt}{$
\overbrace{\begin{picture}(60,30)
\put(0,5){\line(1,0){60}}
\put(0,15){\line(1,0){60}}
\put(0,25){\line(1,0){60}}
\put(0,5){\line(0,1){20}}
\put(10,5){\line(0,1){20}}
\put(20,5){\line(0,1){20}}
\put(50,5){\line(0,1){20}}
\put(60,5){\line(0,1){20}}
\put(35,10){\makebox(0,0){$\cdots$}}
\put(35,20){\makebox(0,0){$\cdots$}}
\end{picture}}^{\ell}$}
\subseteq  S^\ell \cA,
\end{equation}
where $S^\ell\cA\subset \bigotimes^\ell\cA$ is the subspace of symmetric tensors.
\begin{proposition}\label{PiBoxtimes}
The map $\Pi^{(\ell)}$, defined in \nn{Piell}, is valued in $\Ga(\boxtimes^\ell\cA)$.
\end{proposition}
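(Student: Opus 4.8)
The plan is to pin down $\boxtimes^\ell\cA$ concretely inside $\otimes^\ell\cA$ and then check that $\Pi^{(\ell)}\sigma$ meets the defining conditions. Writing $\cA=\wedge^2\cT$, a section $T^{\form{B}_1\ldots\form{B}_\ell}$ of $\otimes^\ell\cA$ (with $\form{B}_i=[B_i^1B_i^2]$) lies in the Cartan component $\boxtimes^\ell\cA\subseteq S^\ell\cA$ precisely when, beyond being skew in each pair, it is (i) symmetric under permutations of the $\ell$ pairs, and (ii) annihilated by every skew-symmetrization of three of its $2\ell$ standard-tractor indices, e.g.\ $T^{[B_1^1B_1^2B_2^1]B_2^2\form{B}_3\ldots\form{B}_\ell}=0$. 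Condition (ii) is exactly the statement that the associated Young diagram has no column of length exceeding two. So the proposition reduces to establishing (i) and (ii) for $T=\Pi^{(\ell)}\sigma$, for an arbitrary symmetric $\sigma^{a_1\ldots a_\ell}$; note the Killing equation plays no role here. Once (i) is known we land in $S^\ell\cA$, and then by that symmetry it suffices to verify (ii) on a single triple.

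Both conditions I would prove by induction on $\ell$, carrying a combined symmetry statement for the partial prolongations $\Pi^{(k)}\sigma\in\cE^{(a_1\ldots a_{\ell-k})}\otimes\Ga(\otimes^k\cA)$: at each stage the tractor factors already satisfy (i) and (ii), and antisymmetrizing one free index $a_j$ against both members of any pair $\form{B}_i$ also vanishes. The base case $k=1$ is immediate from \eqref{Pi}. For the inductive step I would apply one further $\Pi$, substituting the explicit tractor connection \eqref{traccon} for the coupled $\na$ acting on the $\cA$-slots. Expanding in the injectors splits each new $\cA$-factor into a ``top'' part ($\Y$, one $Y$ and one $Z$) and a ``bottom'' part ($\Z$, two $Z$'s), so the claim becomes a finite list of identities among symmetrized and skew-symmetrized derivatives of $\sigma$.

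Condition (ii) is the mechanical half. After skew-symmetrizing three standard-tractor indices, any term carrying two $Y$'s dies because $\mathcal{L}$ is a line; the surviving terms always display either a factor of the symmetric metric or $\delta$ arising from the connection's algebraic corrections, or a skew-symmetrization spanning the two symmetric indices of $\sigma$, as in $\na^{[e}\sigma^{fc]}=0$. In each case the antisymmetrization annihilates a symmetric object, so (ii) holds essentially for free, using only the symmetry of $\sigma$ and of $g$.

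The genuine obstacle is condition (i): slot symmetry. There is no a priori reason for $\Pi^{(\ell)}$, which builds the tractor factors up one at a time and treats the newest slot asymmetrically, to produce a symmetric tensor. The symmetry is forced by a cancellation in which the normalizations $\tfrac{1}{\ell+1}$ in \eqref{Pi} conspire with the correction terms $-2\psi_a{}^{b}$ and $\tfrac{2}{n}\rJ\,\delta_a^{[b}\varphi^{c]}$ of the tractor connection \eqref{traccon}. Concretely, the mixed ``top--bottom'' and ``bottom--top'' components of the new pair of slots must coincide, while the ``bottom--bottom'' component must be symmetric; the latter produces iterated second derivatives $\na^{[c}\na^{[b}\sigma^{\cdots]\cdots]}$ whose symmetrization defect is a curvature term, which on a space of constant curvature is a multiple of the metric and is governed by Lemma \ref{identity}. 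Checking that all these pieces match across the induction --- organised by the number of $Y$-injectors, i.e.\ by differential order --- is where the computation concentrates, and it is exactly here that the constancy of the curvature, equivalently the flatness of the tractor connection, is used.
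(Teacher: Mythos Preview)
Your outline is sound and lands on the same two-step strategy as the paper: first show $\Pi^{(\ell)}\sigma$ is symmetric in the form indices (your (i)), then show that every triple skew-symmetrization over standard-tractor indices vanishes (your (ii)). The execution, however, differs in two notable ways.

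First, rather than running a full induction, the paper writes down once and for all the explicit formula \nn{formula} for $(\Pi^{(2)}\sigma)^{a_1\ldots a_{\ell-2}\form{B}\form{C}\bullet}$ on an arbitrary tensor-tractor section and reads off both (i) and the triple-skew vanishing directly from it. Since $\Pi^{(\ell)}$ is an iterated application of $\Pi$ and the tractor connection is flat, symmetry in any two \emph{neighbouring} form indices follows from \nn{formula}, and that is already enough for full $S^\ell$-symmetry. This is shorter than carrying an enriched inductive hypothesis (your extra clause about antisymmetrizing a free index $a_j$ against a pair $\form{B}_i$ is not needed).

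Second, your claim that ``by that symmetry it suffices to verify (ii) on a single triple'' skips a step. Symmetry in the form indices reduces the check to \emph{two} inequivalent triples, $[B_1^1B_1^2B_2^1]$ and $[B_1^1B_2^1B_3^1]$. The paper disposes of the second via a representation-theoretic observation: in the decomposition \nn{3sym} of $S^3(\wedge^2\cT)$ the only summand with a column of length $\geq 3$ is the full skew $\myng{1,1,1,1,1,1}$, so any three-fold antisymmetrization inside $S^3\cA$ is automatically a four-fold one, and both triples collapse to the same check. You would need either this argument or a second direct computation.

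One minor slip: you invoke Lemma \ref{identity} for the curvature cancellation in the bottom--bottom slot, but that lemma assumes $\sigma$ is Killing, which --- as you correctly note earlier --- plays no role in this proposition. The commutator defect $[\na^{b^1},\na^{c^1}]\sigma^{\ldots b^2c^2}$ is absorbed not by Lemma \ref{identity} but by the explicit $\tfrac{2}{n}\rJ\,g^{b^1c^1}\sigma^{\ldots b^2c^2}$ term already present in \nn{formula}, which comes from the algebraic correction in the tractor connection \nn{traccon}. So the symmetry is visible from \nn{formula} alone, without appealing to any Killing identity.
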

\begin{proof}
Let $\si^{a_1 \ldots a_\ell} \in \cE^{(a_1 \ldots a_\ell)}$. 
Using abstract indices $\form{B}_i = [B_i^1B_i^2]$, we have
$\bigl( \Pi^{(\ell)}\sigma \bigr)^{B_1^1 B_1^2 \ldots B_\ell^1 B_\ell^2}
\in \cE^{\form{B}_1 \ldots \form{B}_\ell}=\Ga(\otimes^\ell\cA)$.

First, we prove that 
$(\Pi^{(\ell)} \si)^{\form{B}_1 \ldots \form{B}_\ell}$ is symmetric in indices
$\form{B}_1$, \ldots, $\form{B}_\ell$, i.e $\Pi^{(\ell)} \si\in\Ga(S^\ell \cA)$. 
In fact, it is sufficient to
show the symmetry in two neighboring indices $\form{B}_i$, $\form{B}_{i+1}$.
To do this, we show that, for all  
$\si^{a_1 \ldots a_\ell \bullet} \in \cE^{(a_1 \ldots a_\ell)\bullet}$ 
with $\ell \geq 2$, 
$(\Pi^{(2)} \si)^{a_1 \ldots a_{\ell-2}\form{B}_1\form{B}_2\bullet}$ is 
symmetric in $\form{B}_1$ and $\form{B}_2$ . 
This follows from the explicit formula 
\begin{eqnarray} \label{formula}
\begin{split}
(\Pi^{(2)} &\si)^{a_1 \ldots a_{\ell-2}\form{B}\form{C}\bullet} = 
\Y^\form{B}_{\,b} \Y^\form{C}_{\,c} \si^{a_1 \ldots a_{\ell-2}bc\bullet} \\
& + \frac{1}{\ell+1} \left[ 
\Y^\form{B}_{\,b} \Z^\form{C}_{\,\form{c}} 
\na^{c^1} \si^{a_1 \ldots a_{\ell-2}bc^2\bullet}
+\Z^\form{B}_{\,\form{b}} \Y^\form{C}_{\,c} 
\na^{b^1} \si^{a_1 \ldots a_{\ell-2}b^2c\bullet} \right] \\
& + \frac{1}{\ell}  
\Z^\form{B}_{\,\form{b}}  \Z^\form{C}_{\,\form{c}} \left[ \frac{1}{\ell+1}
\na^{b^1} \na^{c^1} \si^{a_1 \ldots a_{\ell-2}b^2c^2\bullet}
+ \frac2n \rJ g^{b^1c^1} \si^{a_1 \ldots a_{\ell-2}b^2c^2\bullet} \right]
\end{split}
\end{eqnarray}
obtained from \nn{Pi} after a short computation.
Here $\form{B} = [B^1B^2]$, $\form{C} = [C^1C^2]$, $\form{b} = [b^1b^2]$ and
$\form{c} = [c^1c^2]$.

Then, it suffices to prove that
$\bigl( \Pi^{(\ell)}\sigma \bigr)^{B_1^1 B_1^2 \ldots B_\ell^1 B_\ell^2}
\in \cE^{\form{B}_1 \ldots \form{B}_\ell}$ 
vanishes after skew--symmetrization over any triple of indices $B_i^j$. 
Since $(\Pi^{(\ell)} \si)^{\form{B}_1 \ldots \form{B}_\ell}$ is symmetric in 
tractor form indices $\form{B}_i$, it is sufficient to consider only two triples
of indices: either $B_1^1$, $B_1^2$, $B_2^1$ or $B_1^1$, $B_2^1$, $B_3^1$.
Elementary representation theory shows that 
the third symmetric power of $\cA$ has the 
decomposition
\begin{equation} \label{3sym}
S^3 \bigl( \myng{1,1} \bigr) =  \myng{1,1,1,1,1,1} \oplus
2\,  \myng{2,2,1,1} \oplus  \myng{3,3}.
\end{equation}
Hence, if we skew over three 
factors of the standard tractor bundle in $S^3 \cE^{\form{A}}$, 
the result will be in fact skew 
symmetric in at least four factors of the standard tractor bundle. 
As a result, it is sufficient to consider only skew symmetrization over indices 
$B_1^1$, $B_1^2$, $B_2^1$ of 
$\bigl( \Pi^{(\ell)} \si \bigr)^{B_1^1 B_1^2 \ldots B_\ell^1 B_\ell^2}$.
It is a straightforward computation to show 
$\bigl( \Pi^{(\ell)} \si\bigr)^{[B_1^1 B_1^2 B_2^1]B_2^2 \ldots 
B_\ell^1 B_\ell^2} =0$, using \nn{formula} with $\ell=2$.
\end{proof}

The proposition can be also proved using invariant techniques in 
parabolic geometries (known as "BGG machinery") applied to the case of projective 
parabolic geometry. We refer to the next section for further discussion on this 
relation.

Next, we get the main result of this section
\begin{theorem}\label{Cor:KT-Young}
The map $\Pi^{(\ell)}$ induces a bijective correspondence between 
the space $\cK_\ell$ of Killing $\ell$-tensors and
the space of parallel sections of the tractor bundle $\boxtimes^\ell\cA$.
\end{theorem}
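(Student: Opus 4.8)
The plan is to show that $\Pi^{(\ell)}$ restricts to an isomorphism from $\cK_\ell$ onto the space $\mathcal P$ of parallel sections of $\boxtimes^\ell\cA$. That $\Pi^{(\ell)}$ lands in $\mathcal P$ is already in hand: Proposition \ref{prolong} sends a Killing tensor to a parallel section of $\otimes^\ell\cA$, and Proposition \ref{PiBoxtimes} places this section in $\Ga(\boxtimes^\ell\cA)$. For injectivity I would introduce the algebraic \emph{top-slot} projection $\pi\colon\Ga(\boxtimes^\ell\cA)\to\cE^{(a_1\ldots a_\ell)}$ that contracts every tractor form index with $\Y$. Reading off the leading (all-$\Y$) term in \eqref{Pi}, or in the iterated formula \eqref{formula}, shows $\pi\circ\Pi^{(\ell)}=\mathrm{id}$ on symmetric tensors, so $\Pi^{(\ell)}$ is injective and $\pi$ is a left inverse.

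It then remains to prove surjectivity, which I would reduce to two facts about a parallel section $K\in\mathcal P$ with top slot $\si:=\pi(K)$. The first is that $\si$ is a Killing tensor. To see this I would expand $\na K=0$ in the injector frame and project onto its totally symmetric part in $S^{\ell+1}TM$. By \eqref{traccon} the only non-derivative contributions to this slot come from the adjacent one-$\Z$ component of $K$ through the term $\na\Z\sim\Y g$; these carry a Young symmetry type transverse to $S^{\ell+1}TM$ and hence drop out under symmetrization, exactly as the skew term $2\psi_a{}^b$ drops in the $\ell=1$ case. What survives is the leading-symbol computation from the proof of \eqref{step}, namely $\na^{(a_0}\si^{a_1\ldots a_\ell)}=0$.

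The second fact is that $K=\Pi^{(\ell)}\si$. Here I would note that both sides are parallel --- the right-hand side by Proposition \ref{prolong}, now that $\si$ is known to be Killing --- and that they share the same top slot, since $\pi\circ\Pi^{(\ell)}=\mathrm{id}$. Hence the difference $K-\Pi^{(\ell)}\si$ is a parallel section of $\boxtimes^\ell\cA$ whose leading component vanishes, and the whole statement collapses to the key lemma that \emph{a parallel section of $\boxtimes^\ell\cA$ with vanishing top slot is identically zero}. Granting this, $\pi$ and $\Pi^{(\ell)}$ are mutually inverse and the claimed bijection follows.

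I expect this key lemma to be the main obstacle. The plan is a cascade down the filtration of $\boxtimes^\ell\cA$ by number of $\Z$-factors: reading $\na K=0$ slot by slot, the purely algebraic part of the tractor connection \eqref{traccon} expresses each slot, after using the derivatives of the slots above it, through an algebraic map from the next slot up. In the model case $\ell=1$ this is just $\na_a\ph^b-2\psi_a{}^b=0$, where vanishing of $\ph$ forces $\psi=0$; in general the same pattern should show that the relevant algebraic maps between consecutive slots, restricted to the components surviving in $\boxtimes^\ell\cA$, are injective, so that the vanishing of the top slot propagates downward and kills every remaining slot. Verifying this injectivity slot by slot --- equivalently, identifying it with the standard Kostant-type algebraic fact underlying the BGG construction alluded to after Proposition \ref{PiBoxtimes} --- is the only genuinely computational part of the argument.
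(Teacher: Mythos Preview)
Your proposal is correct and follows essentially the same route as the paper: forward direction by Propositions~\ref{prolong} and~\ref{PiBoxtimes}, the top-slot projection recovering a Killing tensor from a parallel section, and the reduction of surjectivity to the key lemma that a parallel section of $\boxtimes^\ell\cA$ with vanishing top slot is zero, proved by cascading down the $\Y/\Z$ filtration. The paper carries out your cascade explicitly: assuming $\si_{i_0+1}=\cdots=\si_\ell=0$, the $(i_0{+}1)$-$\Y$ component of $\na F=0$ forces $(\si_{i_0})^{(a_1\ldots a_{i_0+1})b\,\form{c}_{i_0+2}\ldots\form{c}_\ell}=0$, and then the Young symmetry inherited from $\boxtimes^\ell\cA$ is used to conclude $\si_{i_0}=0$ --- exactly the injectivity of the slot-to-slot map you anticipate, and indeed the Kostant/BGG-type fact you allude to.
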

\begin{proof}
If $\sigma_\ell$ is a Killing $\ell$-tensor, then $\Pi^{(\ell)}\sigma_\ell$ is a parallel section of 
$\boxtimes^\ell \cA$, by Propositions \ref{prolong} and \ref{PiBoxtimes}. 
It remains to prove that, if $F$ is a non-vanishing parallel section of
$\boxtimes^\ell\cA$, then $F = \Pi^{(\ell)} \si_\ell$ for some 
Killing $\ell$-tensor $\si_\ell$. 

As a section of $\Ga(S^\ell \cA)$, $F$ has the form
\begin{eqnarray} \label{Fsum}
\begin{split}
F^{\form{A}_1 \ldots \form{A}_\ell} = \sum_{i=0}^\ell
\Y^{(\form{A}_1}_{\,a_1} \ldots \Y^{\form{A}_i}_{\,a_i}
\Z^{\form{A}_{i+1}}_{\,\form{c}_{i+1}} \ldots 
\Z^{\form{A}_\ell)}_{\,\form{c}_\ell}
(\si_i)^{a_1 \ldots a_i \form{c}_{i+1} \ldots \form{c}_\ell},
\end{split}
\end{eqnarray}
where $(\form{A}_1 \ldots \form{A}_\ell)$ denotes the symmetrization over the form tractor
indices (and \idx{not} over the standard tractor indices). Here
$(\si_i)^{a_1 \ldots a_i \form{c}_{i+1} \ldots \form{c}_\ell} \in
\cE^{a_1 \ldots a_i \form{c}_{i+1} \ldots \form{c}_\ell}$
where $a_i$ are indices of the tangent bundle whereas 
$\form{c}_i = [c_i^1c_i^2]$ are form indices. 
Since $F\in\Ga(\boxtimes^\ell \cA)$, the skew symmetrization over any triple of indices of 
$(\si_i)^{a_1 \ldots a_i [c_{i+1}^1c_{i+1}^2] \ldots [c_\ell^1 c_\ell^2]}$ 
vanishes. 

First, we show that $\si_\ell =0$ implies $F= 0$. 
To do that, we assume $\si_{i_0+1} = \ldots = \si_\ell=0$ and prove
$\si_{i_0}=0$, with $0\leq i_0<\ell$. The tractor form
$\na^b F^{\form{A}_1 \ldots \form{A}_\ell}$ can be written in the form
\nn{Fsum}, and it follows from \nn{tracconYZ} that
\begin{align*}
\na^b F^{\form{A}_1 \ldots \form{A}_\ell} =& 2(\ell-i_0) 
\Y^{(\form{A}_1}_{\,a_1} \ldots \Y^{\form{A}_{i_0+1}}_{\,a_{i_0+1}}
\Z^{\form{A}_{i_0+2}}_{\,\form{c}_{i_0+2}} \ldots 
\Z^{\form{A}_\ell)}_{\,\form{c}_\ell}
(\si_{i_0})^{(a_1 \ldots a_{i_0} a_{i_0+1})b
\form{c}_{i_0+2} \ldots \form{c}_\ell} \\
& \text{+ terms with at most $i_0$ of $\Y$'s}.
\end{align*}
Thus $(\si_{i_0})^{(a_1 \ldots a_{i_0} a_{i_0+1})b
\form{c}_{i_0+2} \ldots \form{c}_\ell}=0$. On the other hand, symmetries of
$F$ imply that symmetries of
$(\si_{i_0})^{a_1 \ldots a_{i_0} \form{c}_{i_0+1} \ldots \form{c}_\ell}$
correspond to the Young diagram 
$\raisebox{-13pt}{$
\overbrace{\begin{picture}(60,30)
\put(0,5){\line(1,0){60}}
\put(0,15){\line(1,0){60}}
\put(0,25){\line(1,0){60}}
\put(0,5){\line(0,1){20}}
\put(10,5){\line(0,1){20}}
\put(20,5){\line(0,1){20}}
\put(50,5){\line(0,1){20}}
\put(35,10){\makebox(0,0){$\cdots$}}
\put(35,20){\makebox(0,0){$\cdots$}}
\end{picture}}^{i_0}
\!
\overbrace{\begin{picture}(60,30)
\put(0,15){\line(1,0){60}}
\put(0,25){\line(1,0){60}}
\put(0,5){\line(0,1){20}}
\put(10,15){\line(0,1){10}}
\put(20,15){\line(0,1){10}}
\put(50,15){\line(0,1){10}}
\put(60,15){\line(0,1){10}}
\put(35,20){\makebox(0,0){$\cdots$}}
\put(62,17){\makebox(0,0)[l]{${}$}}
\end{picture}}^{\ell-i_0}$  }$.
Hence $(\si_{i_0})^{(a_1 \ldots a_{i_0} a_{i_0+1})b
\form{c}_{i_0+2} \ldots \form{c}_\ell}=0$ means
$(\si_{i_0})^{a_1 \ldots a_{i_0} \form{c}_{i_0+1} \ldots \form{c}_\ell}=0$,
as wanted.

Next we need to show that the tensor field 
$(\si_\ell)^{a_1 \ldots a_\ell}$ is Killing. Similarly as above, computing the 
$\Y^{(\form{A}_1}_{\,a_1} \ldots \Y^{\form{A}_{\ell})}_{\,a_{\ell}}$-summand
of $\na^b F^{\form{A}_1 \ldots \form{A}_\ell}$ (which is zero), one easily
concludes that $\na^{(b} (\si_\ell)^{a_1 \ldots a_\ell)}=0$. Details are left
to the reader. Finally, since the difference 
$F - \Pi^{(\ell)} \si_\ell \in \Ga(\boxtimes^\ell\cA)$
is parallel and the 
$\Y^{(\form{A}_1}_{\,a_1} \ldots \Y^{\form{A}_{\ell})}_{\,a_{\ell}}$-summand
of $F - \Pi^{(\ell)} \si_\ell$ vanishes, it follows from the first part of 
the proof that $F - \Pi^{(\ell)} \si_\ell =0$.
\end{proof}


\subsection{Construction of commuting symmetries.}
Let $V^{a_1 \ldots a_\ell} \in \cE^{(a_1 \ldots a_\ell)}$ be a symmetric tensor
and $\Pi^{(\ell)}$ the map defined in \nn{Piell}.
We define the differential operator ${\fam2 D}^{V}$ of order $\ell$ by
\begin{equation} \label{cS}
{\fam2 D}^{V} := \langle \Pi^{(\ell)}V, \D^{(\ell)} \rangle: \Gamma(U) \to \Gamma(U),
\end{equation}
where $\D^{(\ell)}: \Gamma(U) \to \bigotimes^\ell \Ga (\cA^*) \otimes \Ga (U)$ 
is the $\ell$th iteration of the operator \nn{doubleD}.
\begin{lemma}\label{lem:pcpalsymbol} 
The differential operator ${\fam2 D}^{V}$ has principal
symbol $V^{a_1 \ldots a_\ell}$.
\end{lemma}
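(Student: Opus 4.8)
The plan is to compute the principal symbol of $\mathcal D^V$ directly from the definition \nn{cS}, by isolating the top-order part of the pairing $\langle \Pi^{(\ell)}V, \D^{(\ell)}\rangle$. The key observation is that only the ``highest'' slots of $\Pi^{(\ell)}V$ and $\D^{(\ell)}$ contribute to the leading term. First I would recall from \nn{doubleD12} that each application of $\D_{\bf A}$ carries a factor $2\Y_{\bf A}^{\,a}\na_a$ in its top (derivative) slot, together with lower-order algebraic terms proportional to $\Z_{\bf A}^{\,\form a}$. Iterating $\ell$ times, the principal (order-$\ell$) part of $\D^{(\ell)}$ is $2^\ell\,\Y_{\bf A_1}^{\,a_1}\cdots\Y_{\bf A_\ell}^{\,a_\ell}\,\na_{a_1}\cdots\na_{a_\ell}$, all other contributions being of lower differential order.

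Dually, from the expansion \nn{Fsum} of a section of $\boxtimes^\ell\cA$, the only summand of $\Pi^{(\ell)}V$ that pairs nontrivially with the all-$\Y$ part of $\D^{(\ell)}$ is the $i=\ell$ term, namely $\Y^{(\form A_1}_{\,a_1}\cdots\Y^{\form A_\ell)}_{\,a_\ell}V^{a_1\ldots a_\ell}$, since by \nn{tracmetricforms} the tractor pairing contracts $\Y$ with $\Y$ (up to the factor $\tfrac4n\rJ$) and $\Z$ with $\Z$, so that $\Y^\form A_{\,a}$ pairs only with $\Y_\form A^{\,b}$. Thus I would invoke the contraction $\Y^{\bf A}_{\,a}\Y_\form A^{\,b}=\tfrac12\de_a^b$ from the line following \nn{YZ}, applied $\ell$ times, to collapse the leading term to a multiple of $V^{a_1\ldots a_\ell}\na_{a_1}\cdots\na_{a_\ell}$. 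The numerical constants $2^\ell$ from $\D^{(\ell)}$, $(\tfrac12)^\ell$ from the $\Y$--$\Y$ contractions, and the combinatorial factor from the symmetrization $(\form A_1\ldots\form A_\ell)$ should multiply to $1$; verifying this cancellation is the only quantitative point, and I would check it (most transparently for $\ell=1,2$ and then by the evident iteration).

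\emph{The main obstacle} is bookkeeping rather than conceptual: one must be sure that every lower slot ($\Z$-type) of $\Pi^{(\ell)}V$ paired against a $\Z$-slot of $\D^{(\ell)}$, as well as every mixed term, produces a contribution of differential order strictly less than $\ell$, so that it falls into the ``LOTS'' and does not affect the symbol. This is clear because each $\Z$-slot in $\D^{(\ell)}$ carries a purely algebraic (order-zero) factor by \nn{doubleD12}, so replacing any one $\Y$-derivative slot by a $\Z$-slot lowers the number of covariant derivatives by at least one. Since the top-order term comes uniquely from the all-$\Y$ against all-$\Y$ pairing, and since the form-index symmetrization in \nn{Fsum} is harmless when contracted against the symmetric $\D^{(\ell)}$, the computation closes and the principal symbol is exactly $V^{a_1\ldots a_\ell}$.
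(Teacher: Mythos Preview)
Your approach is essentially the same as the paper's: isolate the ``top'' slot of $\Pi^{(\ell)}V$ and the ``bottom'' slot of $\D^{(\ell)}$ and note that their contraction gives $V^{a_1\ldots a_\ell}\nabla_{a_1}\cdots\nabla_{a_\ell}$ as leading term. The paper does this in two lines using the vertical slot notation; you unfold the same computation in the injector language.

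One slip worth correcting: the pairing $\langle\cdot,\cdot\rangle$ in \nn{cS} is the canonical duality between $\bigotimes^\ell\cA$ and $\bigotimes^\ell\cA^*$ (since $\D^{(\ell)}$ is $\cA^*$-valued by \nn{doubleD}), not the tractor metric of \nn{tracmetricforms}. You should cite directly the duality contractions listed after \nn{YZ}, which give $\Y^{\bf A}_{\,a}\Y_{\bf A}^{\,b}=\tfrac12\de_a^b$, $\Z^{\bf A}_{\,\form a}\Z_{\bf A}^{\,\form c}=\de_{[a^1}^{c^1}\de_{a^2]}^{c^2}$, and zero for the mixed $\Y$--$\Z$ pairings. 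With that correction no factor of $\tfrac{4}{n}\rJ$ ever enters, and your $2^\ell\cdot(\tfrac12)^\ell=1$ cancellation is exact as claimed.
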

\begin{proof}
Extending the vertical notation for elements in
$\Ga(\cA) = \begin{smallmatrix} \cE^a \\ \oplus \\ \cE^{[ab]}  \end{smallmatrix}$
 and $\Ga(\cA^*) = \begin{smallmatrix} \cE_{[ab]} \\ \oplus \\ \cE_a  \end{smallmatrix}$,
to sections in the tensor products $S^\ell \cA$ and  $S^\ell \cA^*$, 
we obtain
\begin{eqnarray}\nonumber
\Pi^{(\ell)}V = \begin{matrix} 
V^{a_1 \ldots a_\ell} \\ \oplus \\ \vdots
\end{matrix} \in 
\begin{matrix} 
\cE^{a_1 \ldots a_\ell} \\ \oplus \\ \vdots
\end{matrix} \quad \text{and} \quad 
\D^{(\ell)}u = \begin{matrix} 
\vdots \\ \oplus \\ \na_{a_1} \!\ldots\! \na_{a_\ell} u
\end{matrix} \in 
\begin{matrix} 
\vdots \\ \oplus \\ \cE_{a_1 \ldots a_\ell} \otimes \Ga(U).
\end{matrix}
\end{eqnarray}
Thus the contraction $\langle \Pi^{(\ell)}V, \D^{(\ell)} u\rangle$
has the leading term $V^{a_1 \ldots a_\ell} \na_{a_1} \ldots \na_{a_\ell}u$. 
\end{proof}
We consider a Riemannian invariant linear differential 
operator $F: \Ga(U) \to \Ga(U)$, acting on a tensor bundle $U$. 
\begin{theorem} \label{symmU}
Let $k^{a_1 \ldots a_\ell} \in \cE^{(a_1 \ldots a_\ell)}$ be a Killing tensor.
Then the differential operator ${\fam2 D}^{k}$ is a commuting symmetry of $F$ with principal
symbol $k^{a_1 \ldots a_\ell}$.
\end{theorem}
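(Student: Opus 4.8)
The plan is to assemble the three facts already in place. Lemma~\ref{lem:pcpalsymbol} identifies the principal symbol of ${\fam2 D}^{k}$ as $k^{a_1 \ldots a_\ell}$, so that half of the statement is immediate; Theorem~\ref{Cor:KT-Young} tells us that $P := \Pi^{(\ell)}k \in \Ga(\otimes^\ell\cA)$ is \emph{parallel} for the tractor connection because $k$ is Killing; and Corollary~\ref{dDcomm} says that $\D_{\bf A}$ intertwines a Riemannian invariant operator with its coupled lift. All that remains is the commutation ${\fam2 D}^{k}F = F{\fam2 D}^{k}$.

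First I would iterate Corollary~\ref{dDcomm}. A single application gives $\D_{\bf A}\circ F = F^\na\circ \D_{\bf A}$ as operators $\Ga(U)\to\Ga(\cA^*\otimes U)$. The lift $F^\na$ is again a Riemannian invariant operator, now in the coupled sense on $\cA^*\otimes U$, so the corollary applies again, and after $\ell$ steps one obtains $\D^{(\ell)}\circ F = F^\na\circ \D^{(\ell)}$, where $F^\na$ now denotes the coupled operator acting on $(\otimes^\ell\cA^*)\otimes U$. Substituting this into the definition \eqref{cS} yields
$$
{\fam2 D}^{k}F = \langle P,\, \D^{(\ell)}F\rangle = \langle P,\, F^\na\,\D^{(\ell)}\rangle .
$$

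The decisive step is to push $F^\na$ through the contraction against $P$. Here I would use that the contraction $\langle P,\,\cdot\,\rangle:\Ga\bigl((\otimes^\ell\cA^*)\otimes U\bigr)\to\Ga(U)$ is a \emph{parallel} bundle map: since $\na_c P = 0$, the Leibniz rule for the coupled connection gives $\langle P, \na_c\Phi\rangle = \na_c\langle P,\Phi\rangle$, so the contraction intertwines the coupled connection on the source with the Levi-Civita connection on $U$. Because any Riemannian invariant $F$ is built from $\na$, the metric $g$ and the scalar $\rJ$, and on a space of constant curvature both are parallel ($\na g = 0$, $\na\rJ = 0$), every constituent of $F^\na$ commutes with $\langle P,\,\cdot\,\rangle$. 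This is precisely the identity $\langle P, F^\na\Phi\rangle = F\langle P,\Phi\rangle$; applying it with $\Phi = \D^{(\ell)}u$ gives ${\fam2 D}^{k}F = F\langle P,\D^{(\ell)}\rangle = F{\fam2 D}^{k}$, as desired.

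I expect the only real obstacle to be the clean formulation of the intertwining identity $\langle P, F^\na\Phi\rangle = F\langle P,\Phi\rangle$, uniformly in the a priori arbitrary polynomial expression for $F$ in $\na$, $g$ and $\rJ$. Once one grants that $\langle P,\,\cdot\,\rangle$ is a parallel map and that $g,\rJ$ are parallel, this collapses to a bookkeeping induction on the number of covariant derivatives appearing in $F$, carrying no genuine analytic content; the simplest instance $\ell=1$, $F=\De$ recovers the statement of Remark~\ref{Rmk:Killingvf} that $L_k = K^{\bf A}\D_{\bf A}$ commutes with $\De$.
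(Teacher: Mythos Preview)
Your proposal is correct and follows essentially the same approach as the paper: both use Lemma~\ref{lem:pcpalsymbol} for the principal symbol, the parallelness of $\Pi^{(\ell)}k$ (via Proposition~\ref{prolong} or equivalently Theorem~\ref{Cor:KT-Young}), and Corollary~\ref{dDcomm} to obtain the chain $\langle K,\D^{(\ell)}Fu\rangle = \langle K,F^\na\D^{(\ell)}u\rangle = F\langle K,\D^{(\ell)}u\rangle$. The only cosmetic differences are that you run the chain starting from ${\fam2 D}^kF$ rather than $F{\fam2 D}^k$, and you spell out more carefully the iteration of Corollary~\ref{dDcomm} and the reason the contraction against a parallel $P$ intertwines $F^\na$ with $F$; the paper compresses both of these into one line.
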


\begin{proof}
By Lemma \ref{lem:pcpalsymbol}, ${\fam2 D}^{k}$ has principal
symbol $k^{a_1 \ldots a_\ell}$.

Let $u \in \Ga(U)$ and $K := \Pi^{(\ell)} k \in \Ga(\bigotimes^\ell \cA)$ be the
prolongation of the Killing $\ell$-tensor $k$. Then we get
$$
F {\fam2 D}^k u= F \langle K, \D^{(\ell)} u\rangle
= \langle K, F^\na \D^{(\ell)} u\rangle 
= \langle K, \D^{(\ell)} F u\rangle = {\fam2 D}^k F u,
$$
where we have used Proposition \ref{prolong} (which implies $\na K=0$) 
in the second equality and Corollary \ref{dDcomm} in the third equality.
Recall the operator $F^\na$ is given by the same formula as $F$ but $\na$ is
interpreted as the coupled Levi-Civita--tractor connection in $F^\na$.
\end{proof}

\begin{corollary}\label{Cor:SymLaplace}
Let us assume $k^{a_1 \ldots a_\ell} \in \cE^{(a_1 \ldots a_\ell)}$ is a Killing 
tensor. Then ${\fam2 D}^k$ is a commuting symmetry of the Laplacian 
$\De: \cE \to \cE$ with principal symbol $k^{a_1 \ldots a_\ell}$.
\end{corollary}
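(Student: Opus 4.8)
The plan is to read off this statement as an immediate specialization of Theorem~\ref{symmU}. That theorem already produces, for every Killing tensor $k^{a_1 \ldots a_\ell}$ and every Riemannian invariant linear differential operator $F: \Ga(U) \to \Ga(U)$ on a tensor bundle $U$, an operator ${\fam2 D}^k$ that commutes with $F$ and has principal symbol $k^{a_1 \ldots a_\ell}$. So the only thing I need to supply is the verification that the Laplacian on functions meets its hypotheses.

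First I would note that $\De = g^{ab} \na_a \na_b: \cE \to \cE$ is built solely from the metric $g_{ab}$ and the Levi-Civita connection $\na$, with no auxiliary data entering its definition. Hence it is a Riemannian invariant linear differential operator in the required sense, with $U = \cE$ the trivial bundle of smooth functions, and the coupled operator $\De^\na$ obtained by reinterpreting $\na$ as the coupled Levi-Civita--tractor connection is well defined. Invoking Theorem~\ref{symmU} with $F = \De$ then delivers at once that ${\fam2 D}^k \De = \De {\fam2 D}^k$ and that the principal symbol of ${\fam2 D}^k$ is $k^{a_1 \ldots a_\ell}$.

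I do not expect any genuine obstacle here: the substantive content is entirely contained in Theorem~\ref{symmU}, whose proof rests on the parallelism of the prolongation $K = \Pi^{(\ell)}k$ (Proposition~\ref{prolong}) together with the fact that $\D_{\bf A}$, and hence its iterate $\D^{(\ell)}$, commutes past Riemannian invariant operators (Corollary~\ref{dDcomm}). The one point worth flagging is conceptual rather than computational, namely that the general tensor-bundle formulation of Theorem~\ref{symmU} was arranged precisely so that the case $F = \De$ on $\cE$, which was the original motivation for the whole construction, would follow with no further work.
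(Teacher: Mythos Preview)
Your proposal is correct and matches the paper's approach exactly: the corollary is stated without proof in the paper, as it is an immediate specialization of Theorem~\ref{symmU} with $F = \De$ on $U = \cE$. Your explicit verification that $\De$ is a Riemannian invariant linear differential operator is the only detail needed, and you have supplied it.
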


\subsection{Algebra structure on the space of commuting symmetries of the Laplace operator 
$\De: \cE \to \cE$}
\label{salgebra}
Let $\cB$ be the algebra of commuting symmetries of $\De$.
The Theorems \ref{th1} and \ref{th2} allow us to identify the vector 
space of commuting symmetries of $\De$,
\begin{equation} \label{algebra}
\cB\simeq \bigoplus_{\ell=0}^\infty\cK_\ell.
\end{equation}

In order to study the algebra structure on $\cB$, some basic 
notation is needed. Depending on the curvature, the Lie group of isometries is 
$G=SO(p+1,q)$, $G=SO(p,q+1)$ or $G=E(p,q)$. 
For all possibilities, we denote by 
$\gog=Lie(G)$ the Lie algebra of isometries and use the 
identifications $\frak{so}(p+1,q)\simeq \wedge^2\R^{p+1,q}$,
$\frak{so}(p,q+1)\simeq \wedge^2\R^{p,q+1}$, and 
$Lie(E(p,q))\simeq \wedge^2(\R^{p,q} \lpl\R )$.
In the last case, the identification is deduced from the 
representation of $E(p,q)$ on $\R^{p,q}\oplus\R$, induced by the standard group morphism
$\mathrm{GL}(n,\mathbb{R})\ltimes\mathbb{R}^n\rightarrow\mathrm{GL}(n+1,\mathbb{R})$.

The space of parallel sections of $\cA$ is isomorphic to $\gog$ and it is easy 
to verify that the Lie bracket on $\mathfrak{g}$ is isomorphic to the bracket \nn{bracket}.  
Via the induced identification of the symmetric product $S^\ell\gog$ with parallel sections of $S^\ell\cA$, 
we define the subspace $\boxtimes^\ell \gog \subseteq   S^\ell\gog$
 as follows 
$$
\boxtimes^\ell\mathfrak{g}\cong\{\,\text{parallel sections of }\;\boxtimes^\ell\cA\,\},
$$
where $\boxtimes^\ell\cA$ is defined in \nn{boxtimesA}. 
From Theorem \ref{Cor:KT-Young}, we deduce that $\boxtimes^\ell\mathfrak{g}$ is isomorphic ,
as $\mathfrak{g}$-module, to the space $\cK_\ell$ of Killing $\ell$-tensors. Hence 
$\cB\simeq \bigoplus_{\ell=0}^\infty\boxtimes^\ell\gog$ as $\mathfrak{g}$-module.
\begin{theorem}
The symmetry algebra $\cB$ is isomorphic to the tensor algebra 
\begin{eqnarray}
\bigoplus_{i=0}^\infty\otimes^i\gog
\end{eqnarray}
modulo the two-sided ideal $\cI$, generated by 
\begin{eqnarray} \label{ideal}
V\otimes W-V \boxtimes W-\frac{1}{2}[V,W],\quad V,W\in \gog.
\end{eqnarray}
\end{theorem}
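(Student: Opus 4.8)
The plan is to exhibit $\cB$ as a quotient of the tensor algebra $T(\gog):=\bigoplus_{i=0}^\infty\otimes^i\gog$ through its universal property. The linear map $\gog\cong\cK_1\to\cB$ sending a Killing vector $V$ to the first-order commuting symmetry ${\fam2 D}^V$ defined in \nn{cS} extends uniquely to a unital homomorphism $\Phi:T(\gog)\to\cB$ of associative algebras, $\cB$ being equipped with the composition product. The theorem is then the assertion that $\Phi$ is surjective with $\ker\Phi=\cI$.

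The core of the proof is the quadratic composition identity ${\fam2 D}^V{\fam2 D}^W={\fam2 D}^{V\boxtimes W}+\frac12{\fam2 D}^{[V,W]}$ for $V,W\in\gog$, where $[\,\cdot\,,\cdot\,]$ is the bracket \nn{bracket} and ${\fam2 D}^{V\boxtimes W}$ is the symmetry \nn{cS} attached to the Killing $2$-tensor $V\boxtimes W\in\boxtimes^2\gog\cong\cK_2$. I would establish it by a direct tractor computation: writing ${\fam2 D}^V=K_V^{\bf A}\D_{\bf A}$ with $K_V:=\Pi^{(1)}V\in\Ga(\cA)$ parallel, I expand ${\fam2 D}^V{\fam2 D}^W=K_V^{\bf A}K_W^{\bf B}\D_{\bf A}\D_{\bf B}+K_V^{\bf A}(\D_{\bf A}K_W^{\bf B})\D_{\bf B}$ using the Leibniz rule for $\D_{\bf A}$. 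Since $\na K_W=0$ and $\D_{\bf A}$ commutes with $\na$ (Theorem \ref{Dcomm}), the term $\D_{\bf A}K_W^{\bf B}$ reduces to the algebraic action of the adjoint tractor, which reproduces the bracket \nn{bracket} and contributes the antisymmetric part $\frac12{\fam2 D}^{[V,W]}$; a convenient check is that on functions ${\fam2 D}^V=L_V$ is the Lie derivative (proof of Theorem \ref{Dcomm}), so the antisymmetric part equals $\frac12[L_V,L_W]=\frac12 L_{[V,W]}=\frac12{\fam2 D}^{[V,W]}$. I expect the delicate point to be the symmetric part: one must show the double-$\D$ contraction $K_V^{\bf A}K_W^{\bf B}\D_{\bf A}\D_{\bf B}$, after regrouping, is exactly ${\fam2 D}^{V\boxtimes W}$ — that is, it carries no spurious first-order remainder beyond the Cartan piece $\Pi^{(2)}(V\boxtimes W)$. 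This is where the defining property of $\Pi^{(2)}$ and the identities \nn{tracconYZ} must be used carefully.

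Granting the identity, $\cI\subseteq\ker\Phi$ is immediate. The bracket \nn{bracket} is antisymmetric and hence annihilates the symmetric tensor $V\boxtimes W$; writing $V\boxtimes W=\sum_i X_i\otimes Y_i$, linearity of $V\mapsto{\fam2 D}^V$ together with the identity gives $\Phi(V\boxtimes W)=\sum_i{\fam2 D}^{X_i\boxtimes Y_i}+\frac12\sum_i{\fam2 D}^{[X_i,Y_i]}={\fam2 D}^{V\boxtimes W}$, since $\sum_i X_i\boxtimes Y_i=V\boxtimes W$ and $\sum_i[X_i,Y_i]=0$. Therefore $\Phi(V\otimes W-V\boxtimes W-\frac12[V,W])=0$, and $\Phi$ descends to $\bar\Phi:T(\gog)/\cI\to\cB$. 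Surjectivity follows by filtering both algebras by operator order and inducting on $\ell$: iterating the quadratic identity shows $\Phi(V_1\otimes\cdots\otimes V_\ell)\equiv{\fam2 D}^{V_1\boxtimes\cdots\boxtimes V_\ell}$ modulo lower-order symmetries, and since iterated Cartan products span $\boxtimes^\ell\gog\cong\cK_\ell$ (Theorem \ref{Cor:KT-Young}) while lower orders are covered by the induction hypothesis, the image of $\bar\Phi$ is all of $\cB$.

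Injectivity is the main obstacle, and I would resolve it on the associated graded. The antisymmetric part of each generator of $\cI$ lowers the filtration degree, so $\mathrm{gr}(T(\gog)/\cI)$ is commutative, namely a quotient of $S(\gog)$ by the homogeneous quadratic ideal generated by the non-Cartan component $S^2\gog\ominus\boxtimes^2\gog$; a highest-weight estimate shows this ideal sits inside $\ker(S^\ell\gog\to\boxtimes^\ell\gog)$, whence $\mathrm{gr}_\ell(T(\gog)/\cI)\twoheadrightarrow\boxtimes^\ell\gog$. As $\bar\Phi$ is filtered and surjective with $\mathrm{gr}_\ell\cB\cong\cK_\ell\cong\boxtimes^\ell\gog$ by \nn{algebra} and Theorem \ref{Cor:KT-Young}, it remains to prove the reverse bound $\dim\mathrm{gr}_\ell(T(\gog)/\cI)\le\dim\boxtimes^\ell\gog$, i.e.\ that the quadratic relations already generate all of $\ker(S^\ell\gog\to\boxtimes^\ell\gog)$. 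This is precisely the statement that the Cartan-power algebra $\bigoplus_\ell\boxtimes^\ell\gog$, the homogeneous coordinate ring of the cone of highest-weight vectors, is quadratically presented; for the semisimple cases $\mathfrak{so}(p+1,q)$ and $\mathfrak{so}(p,q+1)$ this is a consequence of Kostant's theorem on the minimal nilpotent orbit, while the Euclidean case $\gog=Lie(E(p,q))$ must be handled by a contraction argument. Once this is granted, $\mathrm{gr}\,\bar\Phi$ is an isomorphism in every degree, hence $\bar\Phi$ is an isomorphism of filtered algebras and $\cB\cong T(\gog)/\cI$.
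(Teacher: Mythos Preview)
Your overall strategy matches the paper's, but the mechanism you propose for the quadratic identity is off in one place and more complicated than necessary in two others.

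First, the Leibniz term $K_V^{\form{A}}(\D_{\form{A}}K_W^{\form{B}})\D_{\form{B}}$ does \emph{not} produce the bracket: by \nn{doubleD1}, $\D_{\form{A}}$ acting on a pure tractor section is simply $2\Y_{\form{A}}^{\,a}\nabla_a$, so $\D_{\form{A}}K_W=0$ for parallel $K_W$. The entire composition therefore collapses to $K_V^{\form{A}}K_W^{\form{B}}\D_{\form{A}}\D_{\form{B}}$, and the paper then splits $K_V^{\form{A}}K_W^{\form{B}}$ into its symmetric and antisymmetric parts. The bracket arises from the \emph{antisymmetric} part $K_V^{[\form{A}}K_W^{\form{B}]}\D_{\form{A}}\D_{\form{B}}$, because $\D_{\form{A}}\D_{\form{B}}$ itself is not symmetric in $\form{A},\form{B}$ (cf.\ the explicit formula for $\D_{\form{B}}\D_{\form{C}}f$ in the examples subsection); a short computation using \nn{bracket} identifies this contribution with $\frac12[K_V,K_W]^{\form{A}}\D_{\form{A}}$. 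Your Lie-derivative check does confirm the end result, but the explanation via an ``algebraic action'' coming from a Leibniz term is incorrect.

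Second, the symmetric part is handled much more simply than you anticipate. The paper uses the decomposition $S^2\cA=\boxtimes^2\cA\oplus\wedge^4\cT$ together with the easily verified identity $\D_{[B^1B^2}\D_{C^1C^2]}=0$: the $\wedge^4\cT$-component of $K_V^{(\form{A}}K_W^{\form{B})}$ is annihilated outright, leaving exactly $(K_V\boxtimes K_W)^{\form{A}\form{B}}\D_{\form{A}}\D_{\form{B}}$. No appeal to the defining property of $\Pi^{(2)}$ or to \nn{tracconYZ} is needed here.

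Third, for injectivity the paper simply invokes the vector space isomorphism $\bigl(\bigoplus_\ell\otimes^\ell\gog\bigr)/\cI\cong\bigoplus_\ell\boxtimes^\ell\gog$ and then observes, via Corollary \ref{Cor:SymLaplace}, that each $\boxtimes^\ell\gog\cong\cK_\ell$ maps to a nonzero $\ell$th-order symmetry. Your associated-graded argument and appeal to Kostant's theorem would actually supply a \emph{proof} of that vector-space isomorphism, which the paper takes for granted; so your route is more self-contained at the cost of importing heavier machinery, and the contraction argument for the Euclidean case $\gog=Lie(E(p,q))$ remains a loose end in your sketch.
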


\begin{proof}
First we compute the compositions 
${\fam2 D}^k  {\fam2 D}^{\check{k}}$, where $k^a,\check{k}^a \in \Ga(TM)$ are Killing 
vector fields. Put $K = \Pi k^a$ and $\check{K} = \Pi \check{k}^a$ where
$K, \check{K} \in \gog$. Since $\nabla\check{K}=0$, the definition \nn{doubleD12} 
of $\D_\form{B}$ acting on functions yields
\begin{align*}
{\fam2 D}^k {\fam2 D}^{\check{k}} &= K^\form{B} \D_\form{B} 
\check{K}^\form{C} \D_\form{C} \\
&=  
\bigl[ \frac{1}{2} \bigl( 
K^\form{B} \check{K}^\form{C} + K^\form{C} \check{K}^\form{B} \bigr)
+ \frac{1}{2} \bigl( 
K^\form{B} \check{K}^\form{C} - K^\form{C} \check{K}^\form{B} \bigr)
\bigr] \D_\form{B}\D_\form{C} \\
&= \bigl( K \boxtimes \check{K} \bigr)^{\form{BC}} \D_\form{B} \D_\form{C}
+ \frac{1}{2} [K,\check{K}]^\form{B} \D_\form{B}.
\end{align*} 
In the last equality, to deal with the symmetrized term, we use the decomposition 
$$S^2\bigl( \myng{1,1} \bigr)= \myng{2,2} \oplus \myng{1,1,1,1}\;,$$ 
and the identity $\D_{[B^1B^2} \D_{C^1C^2]}=0$, which can be easily verified. 
To deal with the skew-symmetrized term, we use \nn{bracket}.

The computation of ${\fam2 D}^k {\fam2 D}^{\check{k}}$ shows that all elements of the form \nn{ideal}
are in the ideal. Since there is a vector space isomorphism
$$
\bigl( \bigoplus_{\ell=0}^\infty\otimes^\ell\gog \bigr) / \cI \, \cong\,
\bigoplus_{\ell=0}^\infty 
\boxtimes^{\ell}\mathfrak{g}\,\, ,
$$
it remains to show that elements in  
$\boxtimes^{\ell}\mathfrak{g}\cong\cK_\ell$
indeed give rise to non-zero $\ell$th order symmetries. This follows from Corollary \ref{Cor:SymLaplace}.
The proof is complete.
\end{proof}

The passage from the tensor algebra to the universal enveloping algebra
$\cU(\gog)$ 
means to substitute $V\otimes W=\frac{1}{2}(V\otimes W+W\otimes V)-\frac{1}{2}(V\otimes W-W\otimes V)$
and quotient through the two-sided ideal generated by $V\otimes W-W\otimes V=[V,W]$, $V,W\in\gog$.
Accordingly, we get 

\begin{corollary}
The symmetry algebra $\cB$ is isomorphic to the universal enveloping algebra 
$\cU(\gog)$
modulo the two-sided ideal generated by 
\begin{eqnarray}
V\otimes W+W\otimes V-2V\boxtimes W,\quad V,W\in \gog.
\end{eqnarray}
\end{corollary}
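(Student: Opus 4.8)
The plan is to reduce the statement to an equality of two-sided ideals inside the tensor algebra $T(\gog) := \bigoplus_{i=0}^\infty \otimes^i\gog$, since both $\cB$ and the target $\cU(\gog)/\cK$ are presented as quotients of $T(\gog)$. By the preceding Theorem we already know $\cB \cong T(\gog)/\cI$, where $\cI$ is the two-sided ideal generated by the elements $r(V,W) := V\otimes W - V\boxtimes W - \frac12[V,W]$, $V,W\in\gog$. On the other hand, $\cU(\gog) = T(\gog)/\mathcal{J}$ with $\mathcal{J}$ generated by $s(V,W) := V\otimes W - W\otimes V - [V,W]$, and the quotient in the statement is $T(\gog)$ modulo the two-sided ideal generated by $\mathcal{J}$ together with the elements $t(V,W):= V\otimes W + W\otimes V - 2V\boxtimes W$; call $\cK$ the image of the latter in $\cU(\gog)$. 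Thus it suffices to prove the ideal equality $\cI = \langle\, s(V,W),\, t(V,W) : V,W\in\gog\,\rangle$ in $T(\gog)$, for then quotienting by $\mathcal{J}=\langle s\rangle$ turns the right-hand side into $\cK\subseteq\cU(\gog)$ and yields $\cB\cong\cU(\gog)/\cK$.

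For the ideal equality I would first record two elementary identities, using only that the $\boxtimes$-product is symmetric, $V\boxtimes W = W\boxtimes V$ (it lies in $\boxtimes^2\gog\subseteq S^2\gog$, hence is symmetric in $V,W$), while the bracket is antisymmetric, $[W,V]=-[V,W]$. Adding and subtracting the generator $r$ with its transpose gives
$$ r(V,W)+r(W,V)=t(V,W), \qquad r(V,W)-r(W,V)=s(V,W), $$
so every generator of $\langle s,t\rangle$ lies in $\cI$. Conversely, solving these relations back yields $r(V,W)=\frac12\bigl(s(V,W)+t(V,W)\bigr)$, so every generator of $\cI$ lies in $\langle s,t\rangle$. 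Hence the two ideals coincide. This is precisely the formal manipulation sketched in the paragraph preceding the statement: writing $V\otimes W$ as the sum of its symmetric and antisymmetric parts, one trades the single relation $r$ for the symmetric relation $t$ and the antisymmetric relation $s$, the latter being exactly the defining relation of $\cU(\gog)$.

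Since this is a purely formal reorganization of a single presentation, I do not expect any genuine obstacle: no geometry or analysis remains once the preceding Theorem is in hand. The only points requiring care are bookkeeping ones, namely confirming that $V\boxtimes W$ is symmetric so that the two displayed identities hold verbatim, and checking that quotienting the $T(\gog)$-ideal $\langle s,t\rangle$ by $\mathcal{J}=\langle s\rangle$ produces exactly the $\cU(\gog)$-ideal $\cK$ generated by the images of $t(V,W)$. Both are immediate, so the corollary follows directly from the Theorem.
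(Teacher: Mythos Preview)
Your proof is correct and follows essentially the same approach as the paper: the paper's argument is just the brief paragraph preceding the corollary, which decomposes $V\otimes W$ into its symmetric and antisymmetric parts and then passes to $\cU(\gog)$ by imposing the antisymmetric relation. You have simply made this precise by exhibiting the ideal equality $\cI=\langle s,t\rangle$ via the identities $r(V,W)\pm r(W,V)$, which is exactly the content of that paragraph spelled out carefully.
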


\subsection{Examples of commuting symmetries}

The recursion tractor formula \nn{cS} for commuting symmetries ${\fam2 D}^k$ can be 
transformed into an explicit formula for ${\fam2 D}^k$, expressed in terms of the 
Levi-Civita connection $\na$ and the curvature $\rJ$, by \nn{step} and \nn{doubleD12}.
In what follows we compute the explicit commuting symmetries up to order 3 acting
on $\cE$. 

We use tractor form indices
$\form{A} = [A^1A^2]$, $\form{B} = [B^1B^2]$, $\form{C} = [C^1C^2]$ and form indices
$\form{a} = [a^1a^2]$, $\form{b} = [b^1b^2]$, $\form{c} = [c^1c^2]$.

For a Killing vector field $k^a \in \cE^a$, we have 
\begin{equation}
K^\form{A} = (\Pi k)^\form{A} = \Y^\form{A}_{\,a} k^a 
+\frac12 \Z^\form{A}_{\,\form{a}} \na^{[a^1} k^{a^2]},\quad
\D^{}_\form{A} f = 2\Y_\form{A}^{\,a} \na_a f.
\end{equation}
Hence, the symmetry ${\fam2 D}^k f= K^\form{A} \D^{}_\form{A} f = k^a \na_a f$ coincides with
the Lie derivative along the Killing vector field $k^a$. 

For a Killing 2-tensor $k^{bc} \in \cE^{(bc)}$, we get
from \nn{formula}
\begin{eqnarray}
\begin{split}
K^\form{AB} = &(\Pi^{(2)} k)^\form{BC} = 
\Y^\form{B}_{\,b} \Y^\form{C}_{\,c} k^{bc} 
+ \frac{1}{3} \left( \Y^\form{B}_{\,b} \Z^\form{C}_{\,\form{c}} 
+ \Y^\form{C}_{\,b} \Z^\form{B}_{\,\form{c}} \right)
\na^{c^1} k^{c^2b}  \\
& + \frac12  
\Z^\form{B}_{\,\form{b}} \Z^\form{C}_{\,\form{c}} \left[
\frac13 \na^{b^1} \na^{c^1} k^{b^2c^2}
+ \frac2n \rJ g^{b^1c^1} k^{b^2c^2} \right].
\end{split}
\end{eqnarray}
Since $\D_\form{B} \D_\form{C} f 
= 4 \Y_\form{B}^{\,b} \Z_\form{C}^{\,\form{c}} g_{bc^0}\na_{c^1}f
+ 4 \Y_\form{B}^{\,b} \Y_\form{C}^{\,c}\na_b \na_c f$ by \nn{doubleD12}, 
we obtain
\begin{eqnarray*}
{\fam2 D}^k = K^\form{BC} \D_\form{B} \D_\form{C} f = 
k^{bc}\na_b\na_c f + (\na_r k^{rc})\na_c f.
\end{eqnarray*}
Note that $K^\form{BC} h_\form{BC}$ is a constant, and  
using $\na^a k^r{}_r = -2 \na_r k^{ar}$ (which follows from
$3 g_{bc}\na^{(a}k^{bc)}= \na^a k^r{}_r + 2\na_r k^{ar}=0$), 
a short computation reveals its value 
\begin{eqnarray*}
K^\form{BC} h_\form{BC} = \frac14 [-\na_r\na_s \si^{rs} + 
\frac{2(n+1)}{n} \rJ k^r{}_r].
\end{eqnarray*}
Thus the modification of ${\fam2 D}^k$ by any multiple of 
$\na_r\na_s k^{rs} - \frac{2(n+1)}{n} \rJ k^r{}_r$ is again a symmetry of $\Delta$.
This means there is no unique formula, written in terms of $k^{ab}$, for a symmetry.
This is in contrast with the case of conformal symmetries  \cite{east}. 

Now we consider Killing 3-tensors $k^{abc} \in \cE^{(abc)}$. Then
\begin{eqnarray*}
g_{cd}\na^{(a} k^{bcd)} = 3\na_r k^{rab} + 3 \na^a k^{br}{}_r =0,
\end{eqnarray*}
and applying $\na_a$ we obtain $\na_r\na_s k^{rsa} + \De k^{ar}{}_r=0$. 
Summarizing, we obtain
\begin{equation} \label{3-Kill}
\na_r k^{rab} = -\na^a k^{br}{}_r, \quad 
\na_r\na_s k^{rsa} = -\De k^{ar}{}_r \quad \text{and} \quad
\na_r k^{rs}{}_s=0,
\end{equation}
where the last equality is the trace of 
$\na_r k^{rab} + \na^a k^{br}{}_r =0$. Now computing $\Pi^{(3)}k$
(which requires to apply $\Pi$ in \nn{Pi} to \nn{formula}) results in
\begin{eqnarray*}
\begin{split}
&K^\form{ABC} = (\Pi^{(3)} k)^\form{ABC} = 
\Y^\form{A}_{\,a} \Y^\form{B}_{\,b} \Y^\form{C}_{\,c} k^{abc}  \\ 
&\ \ + \frac{1}{4} \left(  
\Y^\form{A}_{\,a} \Y^\form{B}_{\,b} \Z^\form{C}_{\,\form{c}} 
+ \Y^\form{C}_{\,a} \Y^\form{A}_{\,b} \Z^\form{B}_{\,\form{c}} 
+ \Y^\form{B}_{\,a} \Y^\form{C}_{\,b} \Z^\form{A}_{\,\form{c}}  \right)
\na^{c^1} k^{c^2ab} \\
&\ \ + \frac13 \left(
\Y^\form{A}_{\,a} \Z^\form{B}_{\,\form{b}} \Z^\form{C}_{\,\form{c}} 
+ \Y^\form{C}_{\,a} \Z^\form{A}_{\,\form{b}} \Z^\form{B}_{\,\form{c}} 
+ \Y^\form{B}_{\,a} \Z^\form{C}_{\,\form{b}} \Z^\form{A}_{\,\form{c}} \right)
\left[
\frac14 \na^{b^1} \na^{c^1} k^{b^2c^2a}
+ \frac2n \rJ g^{b^1c^1} k^{b^2c^2a} \right] \\
&\ \ + 
\Z^\form{A}_{\,\form{a}} \Z^\form{B}_{\,\form{b}} \Z^\form{C}_{\,\form{c}} 
\psi_\form{abc}
\end{split}
\end{eqnarray*}
for some $\psi_\form{abc}$ which we do not need to compute.
Furthermore,
\begin{align}
\D_\form{A} &\D_\form{B} \D_\form{C} f 
= 8 \Y_\form{A}^{\,a} \Z_\form{B}^{\,\form{b}} \Z_\form{C}^{\,\form{c}} 
g_{ab^1} g_{b^2c^1} \na_{c^2} f \nonumber  \\
& + 16 \Y_\form{A}^{\,a} \Y_\form{B}^{\,b} \Z_\form{C}^{\,\form{c}} 
g_{c^1(a} \na_{b)} \na_{c^2} f
+ 8 \Y_\form{A}^{\,a} \Z_\form{B}^{\,\form{b}} \Y_\form{C}^{\,c}
g_{ab^1} \na_{c} \na_{b^2} f \nonumber \\
& + 8 \Y_\form{A}^{\,a} \Y_\form{B}^{\,b} \Y_\form{C}^{\,c} 
\left[ \na_a \na_b \na_c f - \frac{4}{n} \rJ g_{b[a} \na_{c]} f \right]
\end{align}
by \nn{doubleD12}. Combining the two previous displays yields
\begin{align}
{\fam2 D}^k = K^\form{ABC} \D_\form{A} \D_\form{B} \D_\form{C} f =& 
k^{abc}\na_a\na_b\na_c f + \frac{3}{2} (\na_r k^{rbc})\na_b\na_c f  \nonumber \\
& + \frac14 (\na_r\na_s k^{rsc}) \na_cf - \frac{n-1}{2n} \rJ k^{cr}{}_r\na_cf.
\end{align}
By construction, the vector field $(\Pi^{(2)}k)^{a\form{BC}}h_\form{BC}$
is Killing. Using \nn{formula}, \nn{tracmetricforms} and \nn{3-Kill}, one 
easily computes
$$
(\Pi^{(2)}k)^{a\form{BC}}h_\form{BC}  = 
-\frac{1}{12} \bigl[ \na_r\na_s k^{rsa} - \frac{4(n+2)}{n} \rJ k^{ar}{}_r
\bigr].
$$ 
Thus the symmetry ${\fam2 D}^k$ can be modified by a multiple of the operator 
$[\na_r\na_s k^{rsa} - \frac{4(n+2)}{n} \rJ k^{ar}{}_r] \na_af$.

\begin{remark}
Let $\ell\in\mathbb{N}$ and $k\in\cK_\ell$. If the curvature of the metric $g$ vanishes, 
i.e. $M$ is locally isomorphic to the pseudo-Euclidean space $\mathbb{R}^{p,q}$, 
a straightforward computation shows that
$$
{\fam2 D}^k=\sum_{i=0}^\ell \frac{1}{2^i}\binom{\ell}{i} 
\left(\nabla_{a_1}\cdots \nabla_{a_i} k^{a_1\cdots a_\ell}\right)\nabla_{a_{i+1}}\cdots \nabla_{a_\ell}
$$
is a commuting symmetry of $\Delta$. 
This can also be deduced from properties of the Weyl 
quantization of $T^*\mathbb{R}^{p,q}$.
Namely, ${\fam2 D}^k$ coincides with the Weyl quantization of $k$, and the symplectic 
equivariance of the Weyl quantization (see, e.g., 
\cite{Fol89}) yields the equalities $[\Delta,{\fam2 D}^k]=[g,k]_S=0$. Here, 
$[\cdot,\cdot]_S$ denotes the Schouten bracket of symmetric tensors and 
$[g,k]_S=0$ is equivalent to the Killing equation.
\end{remark}

\begin{remark}
Let $\ell\in\mathbb{N}$ and $k\in\cK_\ell$. If $k$ is trace-free, then a straightforward
computation shows that
$$
{\fam2 D}^k=k^{a_1\cdots a_\ell}\nabla_{a_{1}}\cdots \nabla_{a_\ell}
$$
is a commuting symmetry of $\Delta$. This can also be deduced 
from the work of Eastwood \cite{east}. 
Out of conformal Killing tensors $V$, he explicitly builds conformal symmetries of the Laplacian,
i.e., differential operators ${\fam2 D}_1^V$ and ${\fam2 D}_2^V$ with principal symbol $V$ such that
${\fam2 D}_2^V\Delta=\Delta{\fam2 D}_1^V$. The lower order terms involve divergences of $V$ 
and contractions of $V$ with the trace-free Ricci tensor. 
On a space of constant curvature, with $V=k$ a trace-free Killing tensor, both vanish 
and we get ${\fam2 D}_1^k={\fam2 D}_2^k={\fam2 D}^k$.
\end{remark}


\section{Riemannian geometry via projective geometry}
\label{sectproj}

Overdetermined equations for Killing tensors are 
projectively invariant \cite{EGproj}, so it is natural
to consider their prolongation within the framework of projective geometry.
As this is an example of a parabolic geometry, we can employ the general
invariant theory for this class of structures, \cite{CSbook}. 
We shall observe that several results obtain in the 
previous section then follow immediately.

Recall we are interested in manifolds of constant curvature. 
These are conformally
flat and thus projectively flat as well (see \eqref{Rproj} below).  
That is, we will consider locally flat projective structures. 

\subsection{Tractor calculus in projective geometry}
We shall briefly recall invariant calculus on projective manifolds, see
\cite{beg} for more details.
A projective structure on a manifold $M$ is given by a class $[\nabla]$ of special affine  connections  
with the same geodesics as unparametrized curves ("special" means that 
there is a parallel volume form for every connection in $[\na]$.) These
connections are parametrized by nowhere vanishing sections of projective 
density bundles $\cE(1)$. We shall also assume 
orientability, characterized by a compatible volume form 
$\epsilon_{a^1 \ldots a^n} \in \cE_{[a^1 \ldots a^n]}(n+1)\cong\cE$,
parallel for every affine connection in $[\na]$.
 The decomposition of the curvature of $\na$ is
\begin{equation} \label{Rproj} 
R_{ab}{}^c{}_d = \overline{C}_{ab}{}^c{}_d + 
2\de_{[a}^c \overline{\rP}_{b]d}^{},
\end{equation}
where $\overline{\rP}_{ab}$ is the projective Schouten tensor and 
$\overline{C}_{ab}{}^c{}_d=C_{ab}{}^c{}_d$. (That is, conformal and projective 
Weyl tensors coincide.) Note that for the Levi-Civita connection $\na$ of 
an Einstein metric $g$, the curvature is 
also of the form \nn{Rriem} and the relation between projective and Riemannian
 Schouten tensors is 
$\overline{\rP}_{ab} = 2\rP_{ab}$, \cite{gm}. 

We define the standard tractor bundle and its dual by their spaces of
sections $\overline{\cE}^A$ and $\overline{\cE}_A$, respectively, as 
\begin{equation*}
\overline{\cE}^A = \begin{matrix}
\cE^a(-1) \\ \upl \\ \cE(-1)
\end{matrix} \qquad \text{and} \qquad
\overline{\cE}_A = \begin{matrix}
\cE(1) \\ \upl \\ \cE_a(1)
\end{matrix},
\end{equation*}
see \cite{beg} for the meaning of the semi-direct product $\upl$. The choice of 
a connection in the class $[\na]$ turns the previous display into the direct sum
decomposition.
These bundles are equipped with the {\em projectively invariant tractor connection}
which we denote by $\overline{\na}$. Choosing $\na$ in the projective class,
$\overline{\na}$ is explicitly given by the formulas
\begin{equation} \label{tracconproj}
\overline{\nabla}_a\begin{pmatrix}
\nu^b\\ \rho
\end{pmatrix}
=\begin{pmatrix}
\nabla_a \nu^b  + \de_a^b \rho \\
\nabla_a \rho - \overline{\rP}_{ab} \nu^b
\end{pmatrix}
\quad \text{and} \quad
\overline{\nabla}_a\begin{pmatrix}
\si \\ \mu_{b}
\end{pmatrix}
=\begin{pmatrix}
\nabla_a \si  - \mu_a \\
\nabla_a \mu_b + \overline{\rP}_{ab} \si
\end{pmatrix},
\end{equation}
see \cite{beg} for details. Here $\nu^a \in \cE^a(-1)$, $\rho \in \cE(-1)$,
$\si \in \cE(1)$ and $\mu_a \in \cE_a(1)$. We extend the connection $\overline{\nabla}$
to the tensor products of $\overline{\cE}^A$ by the Leibniz rule.
Also note that the structure of the tractor bundle $\overline{\cE}^{[AB]}$
and of its dual is given by
\begin{equation} \label{projad}
\overline{\cE}^{[AB]} =
\begin{matrix}
\cE^{[ab]}(-2) \\ \upl \\ \cE^{a}(-2)
\end{matrix} \qquad \text{and} \qquad
\overline{\cE}_{[AB]} =
\begin{matrix}
\cE_a(2) \\ \upl \\ \cE_{[ab]}(2).
\end{matrix}
\end{equation}

In what follows, we shall use the tractor bundle
\begin{eqnarray}
\overline{\cE}_{A}^{\,\, B} = \overline{\cE}_A \otimes \overline{\cE}^B =
\begin{matrix}
{\cE}^a \\ \upl \\ \cE_{a}^{\,\, b} \oplus \cE \\ \upl \\ \cE_a
\end{matrix},
\end{eqnarray}
where the trace-free part of $\overline{\cE}_{A}^{\,\, B}$ is isomorphic to the 
projective adjoint tractor bundle. Analogously to \nn{doubleD}, we define the 
projectively invariant differential operator
\begin{equation} \label{doubleDproj}
\overline{{\mathbb D}}_A{}^B: 
\cE_{b_1 \ldots b_s}(w) \otimes \overline{\cE}_{C \ldots D}{}^{E \ldots F} \to
\cE_{b_1 \ldots b_s}(w) \otimes \overline{\cE}_A{}^B \otimes 
\overline{\cE}_{C \ldots D}{}^{E \ldots F},
\end{equation}
as follows. Acting on $f\in\cE(w)$ and $\varphi_a\in\cE_a$, $\overline{{\mathbb D}}_A{}^B$
is given by
\begin{eqnarray} \label{dDprojex}
\overline{{\mathbb D}}_A{}^{B} f=
\begin{pmatrix}
0 \\ 0 \,|\, wf \\ \nabla_a f
\end{pmatrix}, \quad
\overline{{\mathbb D}}_A{}^{B}\varphi_c^{}=
\begin{pmatrix}
0 \\ \delta_{c}^b\varphi_{a}^{} \,| -\varphi_c \\ \nabla_a \varphi_c
\end{pmatrix},
\end{eqnarray}
for an affine connection $\na$ in the projective class. 
The formula for 
$\overline{{\mathbb D}}_A{}^B f$ for 
$f \in \overline{\cE}_{C \ldots D}{}^{E \ldots F}$ is formally the same as for 
$f\in\cE(0)$ where we interpret $\na$ as the coupled affine--tractor 
connection. Then we extend $\overline{{\mathbb D}}_A{}^B$
to the general case by the Leibniz rule. 

Henceforth,
we assume the manifold $M$ is projectively flat, i.e.,\ the projective Weyl 
tensor vanishes. This in particular means that 
the tractor connection $\overline{\na}$ is flat.  

Let $F: \Ga(U_1) \to \Ga(U_2)$ be a projectively invariant linear differential 
operator, acting between tensor bundles $U_1$, $U_2$. Then, $F$ can be written in terms
of an affine connection $\na$. 
Regarding $\na$ in the formula for $F$ as the 
coupled affine--tractor connection, we obtain the operator 
$F^{\overline{\na}}: \Ga(\cA^* \otimes U_1) \to \Ga(\cA^* \otimes U_2)$. 
Adapting the proof of Theorem 
\ref{Dcomm} to the projective setting, we obtain the analogue of
Corollary \ref{dDcomm}:

\begin{theorem} \label{dDcommproj}
Let $F: \Ga(U_1) \to \Ga(U_2)$ be a projectively invariant linear differential 
operator over a projectively flat manifold. Then $\overline{\D}_A{}^B$ commutes 
with $F$, i.e.\ 
$$
\overline{\D} \circ F = F^{\overline{\na}} \circ \overline{\D}: \Ga(U_1) \to 
\overline{\cE}_A{}^B \otimes \Ga(U_2).
$$
\end{theorem}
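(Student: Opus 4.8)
The plan is to mirror the proof of Theorem \ref{Dcomm} exactly, transporting each step from the Riemannian tractor setting to the projective one. The statement to establish is that $\overline{\D}_A{}^B$ commutes with any projectively invariant linear differential operator $F$ over a projectively flat manifold, where $F^{\overline{\na}}$ denotes the operator obtained by reinterpreting the connection $\na$ appearing in a formula for $F$ as the coupled affine--tractor connection $\overline{\na}$. The excerpt explicitly announces this route (\textquotedblleft Adapting the proof of Theorem \ref{Dcomm} to the projective setting\textquotedblright), so the architecture of the argument is fixed; what remains is to check that the projective analogues of the two ingredients used in the Riemannian case go through.

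First I would establish the projective analogue of Theorem \ref{Dcomm} itself, namely that $\overline{\D}_A{}^B$ commutes with the coupled affine--tractor connection $\overline{\na}_c$. As in the Riemannian proof, flatness of $\overline{\na}$ reduces the claim to the case where the tractor coefficient bundle is trivial, so it suffices to verify the identity $\overline{\na}_c \overline{\D}_A{}^B = \overline{\D}_A{}^B \overline{\na}_c$ acting on a density $f \in \cE(w)$ and on a one-form $\varphi_c \in \cE_c$; by the Leibniz rule this then propagates to all tensor-tractor bundles. This is a direct computation using the explicit connection formulas \nn{tracconproj} together with the explicit formulas \nn{dDprojex} for $\overline{\D}_A{}^B$, and the key simplifying fact is that in the projectively flat case the Schouten tensor terms that arise from the two orderings cancel (flatness of $\overline{\na}$ encodes precisely the integrability condition making this work). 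Given this commutation and the parallelism $\overline{\na}_a \epsilon = 0$ of the volume form, the statement for $F$ follows formally: writing $F$ as a universal expression in $\na$ and the structural data, one reinterprets $\na$ as $\overline{\na}$ to form $F^{\overline{\na}}$, and since $\overline{\D}_A{}^B$ passes through each $\overline{\na}$ and through the (parallel) structural tensors, one obtains $\overline{\D} \circ F = F^{\overline{\na}} \circ \overline{\D}$, which is the assertion.

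The main obstacle, and the step deserving genuine care, is the first one: verifying that $\overline{\D}_A{}^B$ really does commute with $\overline{\na}_c$ on the generating cases $f \in \cE(w)$ and $\varphi_c \in \cE_c$. Unlike the Riemannian situation, the projective density weight $w$ enters the formulas \nn{dDprojex} nontrivially, so one must confirm that the weight contributions in the \textquotedblleft middle slot\textquotedblright\ $wf$ and $\delta_c^b \varphi_a \,|-\varphi_c$ are compatible with how $\overline{\na}_c$ differentiates weighted tensors. Concretely, one must check that the Schouten-tensor corrections appearing when $\overline{\na}_c$ acts on the slots of $\overline{\D}_A{}^B f$ match those appearing when $\overline{\D}_A{}^B$ acts on the slots of $\overline{\na}_c f$; any mismatch would signal that projective invariance of $F$, rather than mere naturality, is essential at this point. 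I expect this to reduce to the same kind of bookkeeping as in the Riemannian proof, with the role of $\frac{2}{n}\rJ$ now played by $\overline{\rP}_{ab}$, and with projective flatness ($\overline{C}_{ab}{}^c{}_d = 0$) guaranteeing that the curvature obstruction to commuting two derivatives vanishes. Once this computation is secured, the remainder of the argument is a formal consequence, exactly parallel to the derivation of Corollary \ref{dDcomm} from Theorem \ref{Dcomm}.
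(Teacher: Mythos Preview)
Your proposal is correct and follows precisely the route the paper indicates: adapt the direct-computation proof of Theorem \ref{Dcomm} (checking commutation of $\overline{\D}_A{}^B$ with the coupled connection on the generators $f\in\cE(w)$ and $\varphi_c\in\cE_c$, using flatness of $\overline{\na}$ to reduce to trivial tractor coefficients), and then deduce the statement for a general projectively invariant $F$ exactly as Corollary \ref{dDcomm} follows from Theorem \ref{Dcomm}. The paper gives no further detail beyond ``adapting the proof of Theorem \ref{Dcomm} to the projective setting,'' and your outline is a faithful execution of that instruction, including the appropriate caution about the density weight $w$ entering the middle slot of \nn{dDprojex}.
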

As an example, consider the projectively invariant differential operator
\begin{eqnarray} \label{inv2}
\nabla_{(a}\nabla_{b)}+\overline{\rP}_{ab}: \cE(1)\to\cE_{(ab)}(1),
\end{eqnarray}
see e.g.\ \cite{alg}.  
The projective invariance and Theorem \ref{dDcommproj} imply 
\begin{eqnarray} \label{dDcommeq}
\overline{{\mathbb D}}_{A_1}^{\,\, B_1} \dots 
\overline{{\mathbb D}}_{A_\ell}^{\,\, B_{\ell}}
\bigl( \nabla_{(a} \nabla_{b)} + \overline{\rP}_{ab} \bigr)=
\bigl( \nabla_{(a}\nabla_{b)} + \overline{\rP}_{ab} \bigr) 
\overline{{\mathbb D}}_{A_1}^{\,\, B_1} \dots 
\overline{{\mathbb D}}_{A_\ell}^{\,\, B_{\ell}},
\end{eqnarray}   
where $\na$ on the right hand side denotes the coupled affine-tractor connection.

\subsection{Killing tensors in projective geometry}

Let $\ell\in\mathbb{N}$. We shall focus on the PDE
\begin{eqnarray}\label{projsymm}
\nabla_{(a_0}k_{a_1\dots a_\ell)}=0, \quad 
k_{a_1\dots a_\ell}\in\cE_{(a_1\dots a_\ell)}(2\ell), 
\end{eqnarray}
which is projectively invariant \cite{EGproj}. 

Putting $\overline{\cE}_A = \myng{1}$, we have $\overline{\cE}_{[AB]} = \myng{1,1}$ and we set 
$$
\boxtimes^\ell\overline{\cE}_{[AB]} := 
\raisebox{-13pt}{$
\overbrace{\begin{picture}(60,30)
\put(0,5){\line(1,0){60}}
\put(0,15){\line(1,0){60}}
\put(0,25){\line(1,0){60}}
\put(0,5){\line(0,1){20}}
\put(10,5){\line(0,1){20}}
\put(20,5){\line(0,1){20}}
\put(50,5){\line(0,1){20}}
\put(60,5){\line(0,1){20}}
\put(35,10){\makebox(0,0){$\cdots$}}
\put(35,20){\makebox(0,0){$\cdots$}}
\end{picture}}^{\ell}$}
\subseteq  S^\ell \overline{\cE}_{[AB]}.
$$
There exists a linear map 
$\overline{\Pi}^{(\ell)}:\cE_{a_1\ldots a_\ell}(2\ell)\rightarrow\boxtimes^\ell \overline{\cE}_{[AB]}$, 
characterized by curved Casimir operators (see \cite{CScas}), which takes the form
\begin{equation} \label{kK}
\overline{\Pi}^{(\ell)}: k_{a_1 \ldots a_\ell} \mapsto \overline{K}_{[A_1B_1] \ldots [A_\ell B_\ell]}
= \begin{matrix}
k_{a_1 \ldots a_\ell} \\ \upl \\ \vdots
\end{matrix}
\in \begin{matrix}
\cE_{a_1 \ldots a_\ell}(2\ell) \\ \upl \\ \vdots \, 
\end{matrix} 
\end{equation}
and such that $k_{a_1 \ldots a_\ell}$ is 
a solution of \nn{projsymm} if and only if 
$\overline{K}_{[A_1B_1] \ldots [A_\ell B_\ell]}$ is $\overline{\nabla}$-parallel.
Note the unspecified terms (indicated by vertical dots) of 
$\overline{K}_{[A_1B_1] \ldots [A_\ell B_\ell]}$ are differential in 
$k_{a_1 \ldots a_\ell}$, i.e.,\ the map $\overline{\Pi}^{(\ell)}$
 is given by a differential operator.
In fact, this is an example of a splitting operator (see e.g. \cite{CScas} for details). 
It yields an analog of Theorem \ref{Cor:KT-Young}. 
\begin{proposition} \cite{BCEG}
Let $(M,[\na])$ be a projectively flat manifold.
The map $\overline{\Pi}^{(\ell)}$ induces a bijective correspondence  
\begin{equation} \label{projKilling}
\{\text{solutions}\ k_{a_1 \ldots a_\ell}\ \text{of \nn{projsymm}} \}
\stackrel{1-1}{\longleftrightarrow} 
\{\overline{\nabla}-\text{parallel sections of}\ \boxtimes^\ell \overline{\cE}_{[AB]}\}.
\end{equation} 
\end{proposition}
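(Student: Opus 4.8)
The plan is to transfer the argument of Theorem \ref{Cor:KT-Young} to the projective tractor connection $\overline{\na}$, the key structural ingredient being that the splitting operator $\overline{\Pi}^{(\ell)}$ is, by its construction via curved Casimir operators, a section of the top-slot projection $\boxtimes^\ell\overline{\cE}_{[AB]} \to \cE_{(a_1 \ldots a_\ell)}(2\ell)$. The forward implication is then immediate from the defining property of $\overline{\Pi}^{(\ell)}$ stated after \nn{kK}: if $k_{a_1 \ldots a_\ell}$ solves \nn{projsymm} then $\overline{K} = \overline{\Pi}^{(\ell)}k$ is $\overline{\na}$-parallel, and since the top slot of $\overline{K}$ is again $k$, the map is injective. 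It therefore remains to prove surjectivity onto the full space of parallel sections.

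For the converse I would follow the slot-by-slot induction used in Theorem \ref{Cor:KT-Young}. Write an arbitrary $\overline{\na}$-parallel section $\overline{K} \in \Ga(\boxtimes^\ell\overline{\cE}_{[AB]})$ in a decomposition into slots analogous to \nn{Fsum}, with top component $k_{a_1 \ldots a_\ell}$. Two facts are needed. First, that $k$ solves \nn{projsymm}: this comes from projecting the equation $\overline{\na}\overline{K} = 0$ onto its leading slot, using the explicit formulas \nn{tracconproj} propagated to $\overline{\cE}_{[AB]}$ via \nn{projad} and then to $S^\ell$, so that the projective Schouten terms cancel against the off-diagonal part of $\overline{\na}$ and the leading slot reproduces exactly $\na_{(a_0} k_{a_1 \ldots a_\ell)}$. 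Second, that a parallel section with vanishing top slot is zero: differentiating $\overline{\na}\overline{K} = 0$ and reading off components from the top downwards shows that each successive slot is determined by a derivative of the one above, so a vanishing top slot forces all lower slots to vanish. The rectangular Young symmetry of $\boxtimes^\ell\overline{\cE}_{[AB]}$, which restricts the admissible index types exactly as in Proposition \ref{PiBoxtimes}, is what makes this reading-off unambiguous. Granting both facts, $\overline{K} - \overline{\Pi}^{(\ell)}k$ is parallel with vanishing top slot, hence zero, giving $\overline{K} = \overline{\Pi}^{(\ell)}k$.

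I expect the main obstacle to be the bookkeeping in these two slot computations rather than any conceptual difficulty, since the density weights and the tractor-index symmetries must be tracked simultaneously. A conceptually cleaner alternative, which is the route taken in \cite{BCEG}, is to invoke the general prolongation theory for first-order invariant operators in parabolic geometry: the Killing operator \nn{projsymm} is the first operator of a Bernstein--Gelfand--Gelfand sequence, its prolongation connection on $\boxtimes^\ell\overline{\cE}_{[AB]}$ coincides with $\overline{\na}$ in the projectively flat case, and the splitting operator characterized by curved Casimir operators \cite{CScas} then yields the correspondence directly. In that approach the only remaining point is the representation-theoretic identification of the relevant irreducible bundle as $\boxtimes^\ell\overline{\cE}_{[AB]}$ and of the first BGG operator as the Killing operator, both of which are standard.
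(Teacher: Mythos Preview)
The paper does not actually prove this proposition: it is stated with the citation \cite{BCEG} and no proof is given. So there is nothing to compare your argument against in the paper itself; the authors simply import the result.

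Your sketch is nonetheless sound. The first route, transplanting the slot-by-slot argument of Theorem~\ref{Cor:KT-Young} to the projective tractor bundle, works in the projectively flat setting because the tractor connection $\overline{\na}$ is flat and the composition series of $\boxtimes^\ell\overline{\cE}_{[AB]}$ has exactly the same Young-diagram structure as in the Riemannian case; the density weights add only notational overhead. Your second route, invoking the BGG machinery on projectively flat geometries, is precisely what \cite{BCEG} does and is what the citation is pointing to. Either approach is adequate, and you have correctly identified that the only substantive content beyond Theorem~\ref{Cor:KT-Young} is the identification of the relevant irreducible tractor bundle and of \nn{projsymm} as the first BGG operator.
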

If the  Levi-Civita connection of a metric $g$ pertains to the projective class $[\na]$, 
the latter proposition gives a description of Killing tensors for the metric $g$ via the map 
\begin{equation}\label{MetricMap}
V^{(a_1\ldots a_\ell)}\mapsto g_{a_1b_1}\cdots g_{a_1b_1}V^{(a_1\ldots a_\ell)}\in\cE_{(b_1\ldots b_\ell)}(2\ell).
\end{equation}
Indeed, this map gives a bijection between Killing $\ell$-tensors and solutions of the Equation \nn{projsymm}.

\subsection{Construction of symmetries}

Now assume there is a Levi-Civita connection $\na$ in the projective class $[\na]$, such that
the associated metric $g_{ab}$ has constant curvature, i.e.,\
$R_{abcd} = \frac{4}{n}\rJ g_{c[a}g_{b]d}$ with $\rJ$ parallel. 
Then a short computation based on \nn{tracconproj} and \nn{traccon} shows that 
$\overline{\na}$ on  
$\overline{\cE}^{[AB]}$ (resp.\ $\overline{\cE}_{[AB]}$) agrees with the
Riemannian tractor connection $\na$ on  $\cE^\form{A}$ (resp.\ $\cE_\form{A}$).
Moreover the tractor section
\begin{equation}\label{h}
h^{AB} = \begin{pmatrix}
g^{ab} \\ 0 \\ \frac2n \rJ
\end{pmatrix}
\in \overline{\cE}^{(AB)} = 
\begin{matrix}
\cE^{(ab)}(-2) \\ \upl \\ \cE^a(-2) \\ \upl \\ \cE(-2) 
\end{matrix} \cong
\begin{matrix}
\cE^{(ab)} \\ \oplus \\ \cE^a \\ \oplus \\ \cE 
\end{matrix}
\end{equation}
is parallel, cf.\ \cite{gm}. The isomorphism $\cong$ corresponds to the choice of 
connection $\na\in[\na]$,
and in particular trivializes density bundles. 
Here $g^{ab}$ is the inverse of $g_{ab}$ and $\rJ = g^{ab} \rP_{ab} = 
\frac12 g^{ab} \overline{\rP}_{ab}$. Summarizing, we shall consider
the Riemannian manifold $(M,g)$ as the corresponding locally  flat
projective manifold $(M,[\na])$ with the distinguished parallel section
$h^{AB}$. 
This is an example of
 holonomy reduction of Cartan connections, \cite{CGH},
for the projective Cartan connection associated to $(M,[\na])$.

For $\rJ \not=0$, note $h^{AB}$ is non-degenerate, hence a tractor metric.
A direct computation gives the following display and lemma: 
\begin{equation} \label{dDg}
h^{P[A}\overline{{\mathbb D}}_P{}^{B]} g^{ab} = 
h^{P[A}\overline{{\mathbb D}}_P{}^{B]} g_{ab} = 0.
\end{equation}
\begin{lemma} \label{indep}
The explicit formula for the differential operator
$$
h^{P[A}\overline{{\mathbb D}}_P{}^{B]}: \cE(w) \to \cE^{[AB]}(w),
$$
written in terms of the Levi-Civita connection $\na$, does not depend 
on 
$w \in \mathbb{R}$. \qed
\end{lemma}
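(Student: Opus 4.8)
The plan is to isolate the single source of explicit $w$-dependence in $\overline{{\mathbb D}}_A{}^B$ and to show that it is annihilated by the contraction with $h^{P[A}\,\cdot\,{}^{B]}$. Writing all density bundles in the trivialisation afforded by the parallel metric volume form, the Levi--Civita connection $\na$ becomes weight--blind, so the bottom--slot entry $\na_a f$ of $\overline{{\mathbb D}}_A{}^B f$ in \nn{dDprojex} carries no $w$. Hence the only explicit occurrence of $w$ is the middle--slot entry $wf$, and I would record this as
\begin{equation*}
\overline{{\mathbb D}}_A{}^B f = D_A{}^B f + w f\, T_A{}^B,
\end{equation*}
where $D_A{}^B$ is the ($w$--free) operator obtained by deleting the middle slot and $T_A{}^B$ is the canonical section of $\overline{\cE}_A{}^B$ that equals $1$ in the scalar ($\cE$) summand of the middle composition factor $\cE_a{}^b \oplus \cE$ and vanishes elsewhere.

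First I would identify $T_A{}^B$ invariantly. In the composition series of $\overline{\cE}_A{}^B = \overline{\cE}_A \otimes \overline{\cE}^B$ the relevant $\cE$--summand is the tensor product of the quotient $\cE(1)$ of $\overline{\cE}_A$ with the canonical sub--bundle $\cE(-1)$ of $\overline{\cE}^B$. Using the standard projective injectors $X^B$, spanning the sub $\cE(-1)\subset\overline{\cE}^B$, and $Y_A$, the $\na$--splitting of the quotient $\cE(1)$ of $\overline{\cE}_A$, this summand is generated by $Y_A X^B$, so that $T_A{}^B = Y_A X^B$. Feeding this into the contracted, skewed operator gives
\begin{equation*}
h^{P[A}\overline{{\mathbb D}}_P{}^{B]} f = h^{P[A} D_P{}^{B]} f + wf\, h^{P[A} T_P{}^{B]},
\end{equation*}
so the lemma reduces to the purely algebraic identity $h^{P[A} T_P{}^{B]} = 0$.

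The final step is the contraction $h^{PA} Y_P$. In the slot description \nn{h}, $h^{PA}$ has top component $g^{pa}$, \emph{vanishing} middle ($\cE^a(-2)$) component, and bottom component $\frac{2}{n}\rJ$. Since the projective pairing $\overline{\cE}^P\times\overline{\cE}_P\to\cE$ exchanges the filtration slots, $Y_P$ pairs non-trivially only with the bottom ($\cE(-1)$) entry of the $P$--index; the top component $g^{pa}$ therefore drops out, and the vanishing of the middle slot removes the remaining mixed contribution, leaving $h^{PA} Y_P = \frac{2}{n}\rJ\, X^A$. Consequently
\begin{equation*}
h^{PA} T_P{}^B = (h^{PA} Y_P)\, X^B = \tfrac{2}{n}\rJ\, X^A X^B,
\end{equation*}
which is symmetric in $A$ and $B$, so its skew--symmetrisation vanishes and $h^{P[A} T_P{}^{B]} = 0$, establishing the $w$--independence. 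The one delicate point—and the step I would verify most carefully—is the bookkeeping of the projective injectors and the slot--flip in the pairing, since it is precisely the vanishing of the mixed ($\cE^a(-2)$) slot of $h^{PA}$ that forces $h^{PA} Y_P$ to point purely along $X^A$ and hence makes $h^{PA}T_P{}^B$ symmetric.
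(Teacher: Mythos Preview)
Your proof is correct. The paper itself does not give a proof of this lemma at all: it simply states that ``a direct computation gives the following display and lemma'' and appends a \qed, so you have supplied precisely the computation the authors omit. Your isolation of the single $w$--dependent summand $wf\,Y_A X^B$ in \nn{dDprojex}, together with the contraction $h^{PA}Y_P=\tfrac{2}{n}\rJ\,X^A$ (which relies exactly on the vanishing middle slot of $h^{AB}$ in \nn{h}) and the resulting symmetry of $\tfrac{2}{n}\rJ\,X^AX^B$, is the natural way to carry out that direct computation. One small caveat: the projective injectors $X^A$, $Y_A$, $W^A_a$ you use are standard in the literature (e.g.\ \cite{beg}) but are not introduced in this paper, which only sets up injectors \nn{YZstd} for the Riemannian tractor bundle; if you were writing this into the paper you would either need to define them or rewrite the argument purely in the slot notation of \nn{projad} and \nn{h}.
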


\vspace{1ex}

We are ready now to construct the commuting symmetries of the Laplace operator. 
The metric $g$ allows to identify a tensor $V^{a_1 \ldots a_\ell} \in \cE^{(a_1 \ldots a_\ell)}$
with an element in $\cE_{(a_1 \ldots a_\ell)}(2\ell)$, see \nn{MetricMap}, 
and we denote by $\overline{V}\in\boxtimes^\ell\overline{\cE}_{[AB]}$ 
the corresponding tractor, obtained via the map $\overline{\Pi}^\ell$ (see \nn{kK}).
We consider the operators 
\begin{eqnarray} \label{S}
\overline{{\fam2 D}}^V:=h^{A_1C_1}\dots h^{A_\ell C_\ell }
\overline{V}_{ A_1B_1 \dots A_\ell B_\ell} \overline{\mathbb D}_{C_1}{}^{B_1} 
\dots \overline{\mathbb D}_{C_\ell}{}^{B_{\ell}},
\end{eqnarray}
acting on any tensor-tractor bundle $U$.

\begin{lemma} \label{symbol}
The principal symbol of the differential operator $\overline{{\fam2 D}}^V$ 
is the symmetric $\ell$-tensor $V$.
\end{lemma}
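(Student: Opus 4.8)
The plan is to follow the proof of Lemma~\ref{lem:pcpalsymbol}, reading off the leading term of $\overline{{\fam2 D}}^V$ from the top slot of $\overline V$ and the bottom slot of the iterated operator. First I would note that $\overline{{\fam2 D}}^V$ is a differential operator of order $\ell$, being the contraction of the algebraic tensor $\overline V_{\form{A}_1 \ldots \form{A}_\ell}$ with an $\ell$-fold composition of the first-order operators $\overline{{\mathbb D}}$. Since $\overline V_{A_iB_i}$ is skew in each pair $[A_iB_i]$, only the skew part of each operator factor contributes, and I would rewrite
\begin{equation*}
\overline{{\fam2 D}}^V=\overline V_{\form{A}_1 \ldots \form{A}_\ell}\,\prod_{i=1}^{\ell}\bigl(h^{P[A_i}\overline{{\mathbb D}}_P{}^{B_i]}\bigr),
\end{equation*}
thereby expressing $\overline{{\fam2 D}}^V$ through the operators $h^{P[A}\overline{{\mathbb D}}_P{}^{B]}\colon \cE(w)\to\cE^{[AB]}(w)$ of Lemma~\ref{indep}.

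Next I would determine the principal part of a single factor. By \nn{dDprojex} the derivative term of $\overline{{\mathbb D}}_P{}^B f$ is $\nabla_p f$, which occupies the bottom slot $\cE_p$ of $\overline{\cE}_P{}^B$, while the remaining term $wf$ sits in a slot killed by the skew-symmetrization over $[AB]$ (this is precisely the weight-independence of Lemma~\ref{indep}). Contracting the index $P$ against the top component $g^{pa}$ of $h$ from \nn{h} raises $\nabla_p f$ to $\nabla^a f$ and deposits it into the bottom slot $\cE^a$ of $\overline{\cE}^{[AB]}$, i.e.\ the $\cE^a(-2)$ summand of \nn{projad}. Thus, modulo lower-order terms, $h^{P[A}\overline{{\mathbb D}}_P{}^{B]}$ sends $f$ to $\nabla^a f$ in the bottom slot, and the same holds on tractor-valued sections, where $\nabla$ is read as the coupled affine--tractor connection.

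Iterating, the leading term of $\bigl(\prod_i h^{P[A_i}\overline{{\mathbb D}}_P{}^{B_i]}\bigr)u$ is $\nabla^{a_1}\!\cdots\nabla^{a_\ell}u$ lying in the bottom slot $\cE^{a_i}$ of each factor $\overline{\cE}^{[A_iB_i]}$. By \nn{kK} the top slot of $\overline V$ is the tensor $k_{a_1 \ldots a_\ell}$, which under the metric identification \nn{MetricMap} equals $g_{a_1b_1}\cdots g_{a_\ell b_\ell}V^{b_1 \ldots b_\ell}$. The contraction of the up adjoint-tractor indices of the iterated operator against the down indices of $\overline V$ pairs each bottom slot $\cE^{a_i}$ with the corresponding top slot $\cE_{a_i}$ of $\overline{\cE}_{[A_iB_i]}$, so that the leading term of $\overline{{\fam2 D}}^V u$ is
\begin{equation*}
k_{a_1 \ldots a_\ell}\,\nabla^{a_1}\!\cdots\nabla^{a_\ell}u=V^{a_1 \ldots a_\ell}\,\nabla_{a_1}\!\cdots\nabla_{a_\ell}u.
\end{equation*}
Hence the principal symbol of $\overline{{\fam2 D}}^V$ is $V^{a_1 \ldots a_\ell}$.

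The main obstacle is the slot bookkeeping in the two middle steps: one must check that the single derivative produced by $\overline{{\mathbb D}}$ is transported by the top component $g^{ab}$ of $h$ into exactly the slot of $\overline{\cE}^{[AB]}$ that the adjoint-tractor contraction couples to the top slot $k$ of $\overline V$, and that the universal injector normalizations combine to the constant $1$, so that the symbol is $V$ itself and not a multiple of it. A cleaner but less self-contained alternative uses the identification, recorded just above \nn{h}, of $\overline{\nabla}$ on $\overline{\cE}_{[AB]}$ with the Riemannian tractor connection on $\cE_\form{A}$: under it $\overline V$ corresponds to $\Pi^{(\ell)}V$ and $h^{P[A}\overline{{\mathbb D}}_P{}^{B]}$ to the Riemannian operator $\D^\form{A}$, whence $\overline{{\fam2 D}}^V={\fam2 D}^V$ and the statement reduces to Lemma~\ref{lem:pcpalsymbol}.
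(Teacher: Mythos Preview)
Your proposal is correct and follows essentially the same approach as the paper's own proof: both read the top slot of $\overline V$ as the metric-lowered $V$ via \nn{kK} and \nn{MetricMap}, compute that the bottom (leading) slot of $h^{P[A}\overline{{\mathbb D}}_P{}^{B]}f$ is $g^{ab}\nabla_b f$, iterate, and contract. Your treatment is somewhat more explicit about why the skew-symmetrization in $[AB]$ kills the zeroth-order $wf$ term, and your closing alternative (identifying $\overline{{\fam2 D}}^V$ with the Riemannian ${\fam2 D}^V$ via the equality of connections on $\overline{\cE}_{[AB]}$ and $\cE_\form{A}$) is a nice shortcut not spelled out in the paper.
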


\begin{proof}
The proof is analogous to the proof of Theorem \ref{symmU}.
Writing tractor sections in the vertical notation (see \nn{projad}),
we can refer to their ``top'' or ``bottom'' parts.
The ``top'' part of $\overline{V}_{A_1B_1\dots A_\ell B_\ell}$ is 
$g_{a_1b_1}\cdots g_{a_1b_1}V^{(a_1\ldots a_\ell)}$, 
cf.\ \nn{kK} and \nn{MetricMap}. On the other hand, 
an elementary computation (using \nn{dDprojex} and \nn{h}) shows that 
the ``bottom'' part (and the leading term)
of $h^{C[A}\overline{{\mathbb D}}_C{}^{B]}f$ is 
equal to $g^{ab}\nabla_bf$ for any section $f$ of a tensor bundle. Therefore, 
the ``bottom'' part of the composition 
$h^{C_1[A_1}\overline{{\mathbb D}}_{C_1}{}^{B_1]}\dots h^{C_\ell [A_\ell}
\overline{{\mathbb D}}_{C_\ell}{}^{B_\ell]}f$  
is equal to $g^{a_1b_1}\nabla_{b_1}\dots g^{a_\ell b_\ell}\nabla_{b_\ell}f$. 
This completes the proof of the Lemma.
\end{proof}

\begin{theorem}
Let  $(M,g)$ be a pseudo-Riemannian manifold of constant curvature,
 with Levi-Civita connection $\na$, and $(M,[\na])$ the corresponding 
locally flat projective manifold. Then, if $k$ is a Killing $\ell$-tensor, 
the operator $\overline{{\fam2 D}}^k: \cE \to \cE$, defined by \eqref{S}, 
is a commuting symmetry of the Laplace 
operator $\De = g^{ab}\na_a\na_b$. 
\end{theorem}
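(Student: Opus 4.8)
The plan is to mirror the proof of Theorem~\ref{symmU}, replacing its Riemannian inputs (Corollary~\ref{dDcomm} and the parallelness of $\Pi^{(\ell)}k$) by their projective counterparts: Theorem~\ref{dDcommproj}, the commutation identity~\eqref{dDcommeq}, and the $\overline\nabla$-parallelness of both the tractor metric $h^{AB}$ of~\eqref{h} and the prolongation $\overline V=\overline\Pi^{(\ell)}k$. The only genuinely new point is to exhibit the Laplacian $\Delta$ in a projectively invariant guise; once that is done the argument becomes formal.

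First I would realize $\Delta$, up to an additive constant, as the metric trace of the projectively invariant operator~\eqref{inv2}. Writing $\mathcal H_{ab}=\nabla_{(a}\nabla_{b)}+\overline\rP_{ab}\colon\cE(1)\to\cE_{(ab)}(1)$ and trivialising the density bundles by the metric on the constant-curvature space $(M,g)$, one checks, using $g^{ab}\overline\rP_{ab}=2\rJ$, that $g^{ab}\mathcal H_{ab}=\Delta+2\rJ$ on functions. As $\rJ$ is parallel, hence constant, it suffices to prove that $\overline{\mathcal D}^k$ commutes with $g^{ab}\mathcal H_{ab}$.

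Next I would contract both sides of~\eqref{dDcommeq}, with its lower tractor indices renamed $C_1,\dots,C_\ell$, against $h^{A_1C_1}\cdots h^{A_\ell C_\ell}\overline V_{A_1B_1\dots A_\ell B_\ell}$ and against $g^{ab}$. On the left-hand side, $\overline{\mathbb D}^{(\ell)}$ acts before $\mathcal H_{ab}$; since $g^{ab}$ is $\nabla$-parallel it may be pulled through the $\overline{\mathbb D}$'s, so the contraction reproduces exactly $\overline{\mathcal D}^k\bigl((\Delta+2\rJ)u\bigr)$. On the right-hand side the coupled operator $\mathcal H^{\overline\nabla}_{ab}$ is built from the coupled affine--tractor connection $\overline\nabla$; writing $W=\overline{\mathbb D}^{(\ell)}u$ one has $g^{ab}\mathcal H^{\overline\nabla}_{ab}W=g^{ab}\overline\nabla_a\overline\nabla_bW+2\rJ\,W$. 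Because $h^{AB}$ and $\overline V$ are $\overline\nabla$-parallel (the latter by the bijection~\eqref{projKilling}, as $k$ corresponds via~\eqref{MetricMap} to a solution of~\eqref{projsymm}) and $g^{ab},\rJ$ are $\nabla$-parallel, contracting $W$ with these objects turns each coupled derivative into an ordinary one on the base, so the right-hand side collapses to $(\Delta+2\rJ)\,\overline{\mathcal D}^k u$. Comparing both sides yields $\overline{\mathcal D}^k\Delta=\Delta\,\overline{\mathcal D}^k$, and Lemma~\ref{symbol} records that the principal symbol is $k$.

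I expect the main obstacle to be bookkeeping rather than conceptual. One must keep the projective weights consistent through the metric trivialisation, so that the weight-$1$ operator $\mathcal H$, its trace landing in $\cE(-1)$, and the weight-independent operator $\overline{\mathcal D}^k$ of Lemma~\ref{indep} all assemble into honest operators on $\cE$. The second delicate point is the rigorous ``pull-through'' of the $\overline\nabla$-parallel tractors $h^{AB}$ and $\overline V$ past the \emph{second-order} coupled operator $\mathcal H^{\overline\nabla}_{ab}$; this is the projective analogue of the step $\langle K,F^{\nabla}W\rangle=F\langle K,W\rangle$ in Theorem~\ref{symmU}, and it only requires that contraction with a parallel tractor commutes with $\overline\nabla$, applied twice and followed by the parallel contraction with $g^{ab}$.
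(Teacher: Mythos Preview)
Your proposal is correct and follows essentially the same route as the paper: realise $\Delta$ (up to the constant $g^{ab}\overline{\rP}_{ab}=2\rJ$) as the metric trace of the invariant operator~\eqref{inv2}, invoke the commutation~\eqref{dDcommeq}, contract with the $\overline\nabla$-parallel tractors $h^{AB}$ and $\overline K$, and use Lemma~\ref{indep} to pass from $\cE(1)\to\cE(-1)$ to $\cE\to\cE$. The one imprecision is your claim that $g^{ab}$ ``may be pulled through the $\overline{\mathbb D}$'s because it is $\nabla$-parallel'': the operator $\overline{\mathbb D}_A{}^B$ has algebraic pieces beyond $\nabla$, so $\overline{\mathbb D}_A{}^B g^{ab}\neq 0$ in general; what one actually needs---and what the paper records as~\eqref{dDg}---is that the antisymmetrised combination $h^{P[A}\overline{\mathbb D}_P{}^{B]}$ annihilates $g^{ab}$, which is exactly the combination appearing in $\overline{\mathcal D}^k$.
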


\begin{proof} 
Let $\overline{K}\in\boxtimes^\ell \overline{\cE}_{[AB]}$ be the parallel tractor
associated to $k$ via the composition of the maps \nn{MetricMap} and \nn{projKilling}. 
Since the tractor metric $h$ is also parallel with respect to the projective 
tractor connection $\overline{\na}$, it follows from \nn{dDcommeq} and \nn{dDg} that 
$$
\overline{{\fam2 D}}^k \bigl( g^{ab} ( \nabla_{(a}\nabla_{b)}+\overline{\rP}_{ab} ) 
\bigr) = \bigl( g^{ab} ( \nabla_{(a}\nabla_{b)}+\overline{\rP}_{ab}) \bigr) 
\overline{{\fam2 D}}^k: \cE(+1) \to \cE(-1),
$$
where we consider $g^{ab} \in \cE^{(ab)}(-2)$. 
The operator $\overline{{\fam2 D}}^k: \cE(w) \to \cE(w)$,
 expressed in terms of $\na$, does not depend on $w \in \mathbb{R}$
by Lemma \ref{indep}.
Observing that  $g^{ab}\overline{\rP}_{ab}$ is parallel 
for $\na$, the theorem follows.
\end{proof}

\begin{remark} 
Projectively invariant overdetermined operators, as the operator defined in \eqref{inv2},
are discussed in  \cite{EGproj}. They allow for analogous construction of symmetries for other
Riemannian linear differential operators $F: \Ga(U) \to \Ga(U)$.
\end{remark}


\vspace{0.2cm}
{\bf Acknowledgements:} JPM is supported by 
the Belgian Interuniversity Attraction Pole (IAP) within the framework "Dynamics,
Geometry and Statistical Physics" (DYGEST). 
PS and JS gratefully acknowledge the support of the 
grant agency of the Czech Republic under the grant P201/12/G028. 


\end{document}